\documentclass[11pt]{amsart}
\usepackage[T1]{fontenc}
\usepackage{lmodern,microtype}
\usepackage[margin=1in]{geometry}
\usepackage{amsmath,amssymb,amsthm,mathtools}

\usepackage{enumitem}
\usepackage[hidelinks]{hyperref}
\setlist[enumerate]{label=\textup{(\roman*)},leftmargin=2em}
\newtheorem{theorem}{Theorem}[section]
\newtheorem{proposition}[theorem]{Proposition}
\newtheorem{lemma}[theorem]{Lemma}
\newtheorem{corollary}[theorem]{Corollary}
\newtheorem{conjecture}[theorem]{Conjecture}
\theoremstyle{definition}

\newtheorem{remark}[theorem]{Remark}
\newcommand{\Aut}{\operatorname{Aut}}
\newcommand{\Sub}{\operatorname{Sub}}
\newcommand{\IRS}{\operatorname{IRS}}
\newcommand{\Ind}{\operatorname{Ind}}
\newcommand{\Cay}{\operatorname{Cay}}
\newcommand{\vol}{\operatorname{vol}}

\newcommand{\Sym}{\operatorname{Sym}}

\newcommand{\SL}{\operatorname{SL}}
\newcommand{\R}{\mathbb R}
\newcommand{\Q}{\mathbb Q}
\newcommand{\HH}{\mathbb H}
\newcommand{\dd}{\,d}
\title[The continuous Aldous--Lyons conjecture]{Counterexamples to the continuous\\ Aldous--Lyons conjecture}
\author{Tsachik Gelander}
\date{October 1, 2026}
\subjclass[2020]{22D40, 20E08, 20F65, 57M50}
\keywords{Invariant random subgroup, co-soficity, sofic group, lattice, Benjamini--Schramm convergence}

\begin{document}
\begin{abstract}
{We construct non-co-sofic invariant random subgroups in
connected simple real Lie groups of rank one with finite center,
in Zariski connected simple rank-one groups over non-Archimedean
local fields, and in $\Aut(T_{a,b})$ for $a,b\geq3$.} This gives a negative answer to
the continuous analogue of the classical Aldous--Lyons conjecture
for these groups.
{Our constructions build on the existence of a finitely
generated non-sofic group and answer a question raised in the
author's ICM paper \cite[Section~6]{Gel18}.}
\end{abstract}
\maketitle

\section{Introduction}

An invariant random subgroup, or IRS, of a locally compact second
countable group $G$ is a conjugation-invariant Borel probability measure
on the Chabauty space $\Sub(G)$ of closed subgroups. A lattice
$\Lambda<G$ determines an IRS $\mu_\Lambda$ by conjugating $\Lambda$
with a random element of $G/\Lambda$, with respect to normalized
invariant measure. An IRS is \emph{co-sofic} if it can be approximated
by IRSs supported on lattices; equivalently, it belongs to the closed
convex hull of the lattice IRSs.

For bounded-degree random rooted graphs, the classical
Aldous--Lyons conjecture asked whether unimodularity suffices for
approximation by finite graphs \cite[Question~10.1]{AL07}.
It can equivalently be formulated as the assertion that every IRS
of the free group $F_n$ is co-sofic; see
\cite[Section~6]{Gel18} and \cite[Section~3]{AGN17}.
By the continuous Aldous--Lyons conjecture, we mean the co-soficity
question for invariant random subgroups of locally compact groups.
A negative answer to the classical Aldous--Lyons conjecture has been
given in \cite{BCLV24}.

For a free group $F_r$ and a normal subgroup $N\triangleleft F_r$,
the IRS $\delta_N$ is co-sofic if and only if the group $F_r/N$ is sofic.
This observation motivates the questions in \cite[Section~6]{Gel18}
about co-soficity in $\SL_2(\R)$, $\SL_2(\Q_p)$ and automorphism
groups of regular trees, since each of these groups admits a free
group $F_r$ as a lattice.
{For simple real Lie groups, the corresponding approximation
question was posed in \cite[Question~5]{ABBGNRS20}.}
\begingroup
For connected center-free simple real Lie groups of real rank at least
two, every discrete IRS is co-sofic. This follows from the
Nevo--St\"uck--Zimmer classification of ergodic IRSs
\cite[Theorem~1.14]{ABBGNRS17}, residual finiteness of lattices,
and ergodic decomposition. Levit extended the Stuck--Zimmer theorem
to non-Archimedean local fields, including positive characteristic
\cite[Theorem~1.2 and Corollary~1.3]{Lev17}. Thus, discrete IRSs
are co-sofic also in higher-rank non-Archimedean groups; see
\cite[Section~8]{GL18}.

Our main result is the following.

\begin{theorem}\label{thm:main}
Every connected simple real Lie group of real rank one with finite
center admits a non-co-sofic invariant random subgroup.
\end{theorem}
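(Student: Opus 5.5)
The plan is to transfer a non-sofic finitely generated group into $G$ via a lattice that surjects onto a free group, then induce. Fix a finitely generated non-sofic group $Q=F_r/N$; such a group exists by \cite{BCLV24}, since a negative answer to the classical Aldous--Lyons conjecture yields a non-sofic finitely presented (in particular finitely generated) quotient of a free group. Every connected simple rank-one real Lie group $G$ with finite center contains a lattice $\Gamma$ that virtually surjects onto a nonabelian free group; one can even arrange a surjection onto $F_r$ for the given $r$. For $\SO(n,1)$ and $\mathrm{SU}(n,1)$ this follows from arithmetic lattices with large virtual first Betti number and largeness (Lubotzky, Agol, Kazhdan). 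For $\mathrm{Sp}(n,1)$ and $F_4^{(-20)}$ it follows from Gromov's theorem that uniform lattices are hyperbolic, hence SQ-universal, hence have quotients that are free after passing to a finite index subgroup; alternatively one uses Olshanskii-type quotients. After passing to a finite-index subgroup $\Gamma'$ and composing with a surjection $F_m\to F_r$, we obtain $\Gamma'\twoheadrightarrow F_r\twoheadrightarrow Q$ with kernel $M\triangleleft\Gamma'$ and $\Gamma'/M\cong Q$.

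Define the IRS $\mu=\Ind_{\Gamma'}^G\delta_M$, i.e.\ conjugate $M$ by a Haar-random element of $G/\Gamma'$. This is well defined because $M$ is normal in $\Gamma'$ and $\Gamma'$ is a lattice.

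The main step is to show that $\mu$ is not co-sofic. Suppose lattice IRSs $\mu_{\Lambda_k}$, or convex combinations of them, converge to $\mu$. First pass to ergodic components and reduce to single lattices $\Lambda_k$. Then use a ``local rigidity of the pair'' argument. Choose a compact fundamental-domain-type neighbourhood and a finite presentation-like window: $\Gamma'$ is finitely presented, and sofic approximation of $Q$ needs only finitely many relations at a time. Chabauty convergence means that near a typical conjugate $gMg^{-1}$, the random $h\Lambda_kh^{-1}$ agrees with $gMg^{-1}$ on a large compact set $B_R$. Meanwhile $\Gamma'$ has a compact fundamental domain $D$ (or a thick part, if nonuniform). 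The relative position of $D$ with respect to $G/\Lambda_k$ yields a finite Schreier-type structure. Concretely, $G/\Lambda_k$ carries a finite-measure action of $\Gamma'$, and we discretize it into a finite $\Gamma'$-set. Locally this looks like the $\Gamma'$-action on $\Gamma'/M=Q$, which exhibits $Q$ as sofic.

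The cleanest route is to show that $\mu_{\Lambda_k}\to\mu$ implies the restricted IRSs $\Lambda\cap\Gamma'$ on the $\Gamma'$-side converge to $\delta_M$ in $\IRS(\Gamma')$, approximated by finite-index (co-sofic) IRSs. Co-soficity of $\delta_M$ in $\Gamma'$, pulled back to $F_r$ and using that $F_r$ is itself a lattice-friendly free group, then gives soficity of $Q$, a contradiction. I expect this discretization step, passing from a continuous approximation in $\Sub(G)$ to a finite-permutation approximation of $\Gamma'$ modulo $M$, to be the main obstacle. The difficulty is that the lattices $\Lambda_k$ need not be commensurable with $\Gamma'$. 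I would handle it by choosing a Borel cross-section of $G\to G/\Lambda_k$ adapted to a $\varepsilon$-separated net, making the induced cocycle $\Gamma'\times(G/\Lambda_k)\to\Lambda_k$ locally constant on most of the space. Since $M\ni\gamma$ is detected by whether the corresponding element lies in the discrete subgroup near the identity, the word relations are preserved with high probability. Hence the action is an almost-action on a finite set, i.e.\ a sofic approximation of $Q$.
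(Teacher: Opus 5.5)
Your outline (a non-sofic quotient of a uniform lattice, induce $\delta_M$, show the result is not co-sofic) matches the paper's. However, the step you flag as the main obstacle is essentially the whole theorem, and the mechanism you propose for it does not work.

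\textbf{The ``cleanest route'' fails.} $\Gamma'$ is not open, so $H\mapsto H\cap\Gamma'$ is not Chabauty-continuous. Worse, for Haar-almost every $h$ the group $h\Lambda_kh^{-1}\cap\Gamma'$ is central: the set of $h$ conjugating a fixed non-central $\lambda$ to a fixed $\gamma$ is a coset of a lower-dimensional centralizer. So these ``restricted IRSs'' are essentially $\delta_{\{1\}}$, nowhere near $\delta_M$.

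\textbf{The fallback does not close the gap.} Chabauty-closeness to $gMg^{-1}$ means elements are near, not equal. The $\Gamma'$-action on $G/\Lambda_k$ is essentially free and does not see $M$, and an essentially free p.m.p.\ action carries no soficity information by itself, since every countable group has one. Rounding $\gamma x$ to an $\varepsilon$-net gives maps whose compositions match the rounding of $\gamma\eta x$ only up to accumulating errors. That never yields the exact relations a sofic approximation needs, and the net is infinite when $\Lambda_k$ is non-uniform. As a sanity check, your sketch uses nothing about $M$ or $G$ beyond normality and finite covolume. If it worked, it would prove Conjecture~\ref{conj:induction} for all locally compact groups, including $\Aut(T_{2r})\times\Aut(T_{2s})$, which the paper leaves open.

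\textbf{The missing idea.} What is needed is an \emph{exact}, canonical correction of $\gamma x$, and this comes from discreteness of $C_G(N)$. The paper obtains it from Zariski density of $N$:
\begin{enumerate}
\item $N\ne1$, because $\Gamma$ is residually finite, hence sofic;
\item $N$ is infinite, because $\Gamma$ is torsion-free;
\item Borel density plus normality then give Zariski density.
\end{enumerate}
The inverse function theorem then gives a slice $\Sigma\ni N$ in $\Sub(G)$ with a local product $U_0\times\Sigma\to\Sub(G)$. The finite sets are $Z_i(W)=\{x\in G/\Lambda_i:\operatorname{Stab}(x)\in W\subset\Sigma\}$. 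They are finite because the translates $Ux$ are disjoint and of equal volume. The identity $\nu_i(B(W))=m(U)\,|Z_i(W)|/v_i$ turns IRS convergence into concentration of counts. The return maps $T_\gamma x=c(\gamma H_x\gamma^{-1})\gamma x$ satisfy $T_\gamma T_\eta=T_{\gamma\eta}$ and $T_n=\mathrm{id}$ for $n\in N$, and they separate distinct cosets, all exactly, by uniqueness of the small correction. Your remark that membership in $M$ is detected by nearby stabilizer elements is the right local fact, but it becomes usable only once the moves are exact.

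\textbf{Errors in the input.} Lattices in $\mathrm{Sp}(n,1)$, $n\geq2$, and in $F_4^{(-20)}$ have property (T), as do all their finite-index subgroups, so none of them surjects even onto $\mathbb{Z}$. SQ-universality embeds groups in quotients; it does not produce free quotients. For $\mathrm{SU}(n,1)$, positive virtual Betti numbers by themselves give only surjections onto $\mathbb{Z}$. You should use Ol'shanskii's theorem for every rank-one $G$, as the paper does. It gives $\Gamma/N\supseteq P$ with $P$ finitely generated and non-sofic, which suffices because soficity passes to subgroups.

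Also, \cite{BCLV24} does not produce a non-sofic group. Its non-co-sofic IRS of a free group is not constructed as, or known to be, a Dirac mass $\delta_N$, and only the converse implication (non-sofic $F_r/N$ gives non-co-sofic $\delta_N$) holds. The non-sofic groups come from \cite{FF26,KT26,OpenAI26}.

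Finally, the finite center needs an argument. For example, pass to $G/Z(G)$ and use that pushforward under a quotient with compact kernel preserves co-soficity.
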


We also prove the analogous result for automorphism groups of regular
and biregular trees and for rank-one groups over non-Archimedean local
fields, in arbitrary characteristic.

\begin{theorem}\label{thm:trees-padic}
Let $k$ be a non-Archimedean local field of arbitrary characteristic,
and let $\mathbf G$ be a connected simple $k$-group of $k$-rank one.
Then $\mathbf G(k)$ admits a non-co-sofic invariant random subgroup.
The same holds for $\Aut(T_{a,b})$, where $T_{a,b}$ is the biregular
tree with vertex degrees $a,b\geq3$.
\end{theorem}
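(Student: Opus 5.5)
Theorem~\ref{thm:trees-padic} will follow the route already suggested for free groups: start from a finitely generated non-sofic group $Q=F_r/N$, whose existence we take from \cite{BCLV24}. By the observation above, $\delta_N$ is then a non-co-sofic IRS of $F_r$. The task is to move this IRS from a lattice up to the ambient group $G$ (either $\mathbf G(k)$ or $\Aut(T_{a,b})$) and show that co-soficity is not created in the process.

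\textbf{Step 1: a lattice surjecting onto a free group.} We need a lattice $\Gamma<G$ with a surjection $\pi\colon\Gamma\to F_r$ for some $r\geq2$.
\begin{itemize}
\item For $\Aut(T_{a,b})$, take a uniform lattice that acts freely on the edges, with quotient graph having first Betti number at least $2$. Its fundamental group is free, and we can take $\pi$ to be the identity, or a projection onto a free factor.
\item For rank-one $\mathbf G(k)$, Bruhat--Tits theory says $\mathbf G(k)$ acts on a biregular tree. Cocompact torsion-free lattices act freely on that tree, so by Ihara/Serre they are free. Passing to a finite-index subgroup, which is again free of large rank, gives a surjection onto $F_r$ in every characteristic.
\end{itemize}

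\textbf{Step 2: induce the IRS.} Set $\Delta=\pi^{-1}(N)\triangleleft\Gamma$. The IRS $\delta_\Delta$ of $\Gamma$ induces an IRS of $G$,
\[
\mu=\Ind_\Gamma^G\delta_\Delta,
\]
obtained by conjugating $\Delta$ by a Haar-random element of $G/\Gamma$.

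\textbf{Step 3: non-co-soficity of $\mu$.} Suppose $\mu$ is a limit of convex combinations of lattice IRSs $\mu_{\Lambda_n}$. The goal is to restrict back to $\Gamma$ and obtain finite-index approximations of $\Delta$, hence sofic approximations of $\Gamma/\Delta\cong Q$.

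The key tool is the tree. Lattices of $G$ act on $T$ with finite covolume. Chabauty closeness to conjugates of $\Delta$ on large compact sets means the quotient graphs $\Lambda_n\backslash T$, seen from a random vertex, are Benjamini--Schramm close to $\Delta\backslash T$. The graph $\Delta\backslash T$ is a covering of the finite graph $\Gamma\backslash T$, and it is the Schreier-type graph of $Q$ (decorated by finite trees).

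Lattices in these groups may have torsion and unbounded vertex stabilizers, and non-uniform lattices exist in $\mathbf G(k)$ in positive characteristic. So I would first show that a $\mu$-typical subgroup is discrete and torsion-free with trivial vertex stabilizers. Then I would argue that the mass of $\mu_{\Lambda_n}$ on $\Lambda_n$ with nontrivial stabilizers near the root tends to $0$, using Chabauty semicontinuity: nearby subgroups contain no small nontrivial elements near the identity neighborhood of a compact open subgroup.

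Once this holds, most vertices of the finite-volume quotients have a free local structure. The local labelled structure of $\Lambda_n\backslash T$ around most vertices then matches that of $\Delta\backslash T$. This gives finite graphs, edge-labelled by generators of $F_r$, that Benjamini--Schramm approximate the Cayley graph of $Q$. By the free-group equivalence recalled above, this makes $Q$ sofic, which is a contradiction.

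\textbf{Main obstacle.} The crux is this last transfer. Finite-volume quotients must be converted into finite edge-labelled $F_r$-Schreier graphs, even though the $\Lambda_n$ need not be commensurable with $\Gamma$ and need not be contained in it. I would handle this by recording only the local covering data over $\Gamma\backslash T$, which is a finite "labelled pattern" statistic. For non-uniform lattices, I would truncate cusps of small measure and complete the remaining partial Schreier graphs arbitrarily; this changes only a small fraction of vertices.
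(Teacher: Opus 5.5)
There is a genuine gap in your Step~3, exactly where you place the crux. The graphs $\Lambda_n\backslash T$ carry no labels. Since $\Lambda_n$ is unrelated to $\Gamma$, there is no map $\Lambda_n\backslash T\to\Gamma\backslash T$, and so no ``local covering data over $\Gamma\backslash T$'' to record. Chabauty closeness of $g\Lambda_ng^{-1}$ to $\Delta$ identifies rooted balls with balls of $\Delta\backslash T$ only as unlabelled graphs. These identifications depend on the frame $g$, not just on the vertex, so they need not be unique or mutually consistent, and they do not assemble into Schreier graphs.

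This is intrinsic to the problem, not a technicality. In $\Aut(T_{a,b})$ the law of an IRS of freely acting subgroups is determined by the law of its unlabelled rooted quotient graph, because conjugation by the compact vertex stabilizer scrambles any labelling of $T$ pulled back from $\Gamma\backslash T$. Your Step~1 permits the choice $a=b=2r$, $\Gamma$ a free lattice acting simply transitively on vertices, and $\pi=\mathrm{id}$. Then $\Delta\backslash T$ is just the unlabelled Cayley graph of $Q$. Your Step~3 would then deduce soficity of $Q$ from finite approximations of that unlabelled graph. That is the problem of \cite[Question~2]{Timar12}, and nothing in your proposal addresses it.

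The missing idea is to \emph{choose} $\Gamma$ and $\Delta$ so that the unlabelled local geometry of $\Delta\backslash T$ determines the labels. The paper builds a graph $Y_Q$ with a free $Q$-action and finitely many orbits, whose labelled Cayley balls are decoded by a bounded-radius rule. The decodable features are ports on cycles, subdivided paths of lengths $3$, $5$, $7$, and, for $a\neq b$, blocks $K_{b,a}$ minus a matching, recognized as components of edges on short cycles (Propositions~\ref{prop:encoding} and~\ref{prop:biregular-encoding}). It then realizes the finite graph $Q\backslash Y_Q$ as $\Gamma\backslash T$ for a free uniform lattice, by a side-pairing construction that uses edge-transitivity of $G^+$ (Lemma~\ref{lem:realization}). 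This gives $N\backslash T\cong Y_Q$.

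A second step also fails as stated. Totally disconnected groups have no no-small-subgroups property. For a compact open $K$, the set $\{H:H\cap K\neq\{1\}\}$ is open but not closed in $\Sub(G)$, so weak convergence gives no upper bound on its $\mu_{\Lambda_n}$-mass. Approximating lattices can indeed have nontrivial finite vertex stabilizers acting trivially on arbitrarily large balls. What does tend to zero is the mass of the clopen set of $H$ meeting $K\setminus K_1$, where $K_1$ fixes the radius-one ball pointwise.

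The paper calls a vertex good when its stabilizer fixes its neighbours. Adjacent good vertices have equal stabilizers, so the root weight $|\Lambda_{\widetilde v}|^{-1}$ is constant on each good component. Finiteness of the total weight then makes these components finite and uniformly rooted. As a result, non-uniform lattices need no separate cusp truncation (Lemma~\ref{lem:tree-extraction}, with the root law~\eqref{eq:tree-root} when $a\neq b$).

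Finally, \cite{BCLV24} supplies a non-co-sofic IRS of $F_r$, not a non-sofic group. The paper takes the finitely generated non-sofic group from \cite{FF26}.
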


Starting with a uniform lattice $\Gamma<G$ admitting a finitely
generated non-sofic quotient $Q=\Gamma/N$, the strategy is to show
that the IRS induced from $\delta_N$ is non-co-sofic. This induced
IRS is given by
{
\begin{equation}\label{eq:induced}
 \mu_N=\Ind_\Gamma^G\delta_N
       =\int_{G/\Gamma}\delta_{gNg^{-1}}\dd m(g\Gamma).
\end{equation}}
Here $m$ is the normalized invariant measure on $G/\Gamma$.

Non-sofic groups were recently constructed in
\cite{FF26,KT26,OpenAI26}. The existence of non-sofic groups supplies
non-co-sofic \emph{Dirac} IRSs in the free group.

{In rank one, cocompact lattices $\Gamma$ are non-elementary
hyperbolic. Ol'shanskii's theorem
\cite[Theorem~1]{Ols95} embeds any given countable (non-sofic) group
in some quotient $Q=\Gamma/N$ of such a lattice. Since soficity passes
to subgroups, $Q$ is non-sofic. The difficulty lies in showing that
$\Ind_\Gamma^G\delta_N$ is non-co-sofic in $G$.
We conjecture that, in general, the induction of a non-co-sofic IRS
from a uniform lattice remains non-co-sofic
(see Conjecture~\ref{conj:induction}), but we prove this only in the
special cases of Theorems~\ref{thm:main} and~\ref{thm:trees-padic}.
Our proofs make use of the analytic and algebraic structure of these
groups.\par}

\endgroup

\subsection*{Sketch of proof}
\textbf{Lie groups.}
The key step is the transfer proposition (Proposition~\ref{thm:lie-transfer}).
Let $N\triangleleft\Gamma$, where $\Gamma<G$ is a uniform lattice,
and suppose that $C_G(N)$ is discrete. When $G$ is simple,
discreteness of the centralizer follows from Borel density and
normality of $N$.
If the induced IRS were
co-sofic, there would be lattices $\Lambda_i<G$ whose random
conjugates converge in distribution to a random conjugate of $N$.
We turn these continuous approximations into finite models
of the quotient $Q=\Gamma/N$.

We first choose a local transversal in $\Sub(G)$ near $N$:
a family of subgroups to which every sufficiently nearby subgroup
can be moved by a unique conjugation close to the identity.
Discreteness of the centralizer makes this possible by the inverse
function theorem. In $G/\Lambda_i$, we select the cosets
$x=g\Lambda_i$ whose stabilizers $g\Lambda_i g^{-1}$ belong to a
small neighborhood of $N$ within this transversal. The selected
cosets form a finite set. Indeed, translating each of them by the
same small identity neighborhood in $G$ gives disjoint sets of equal
positive volume inside the finite-volume space $G/\Lambda_i$.
The convergence of the IRSs then implies that, among these selected
cosets, the proportion whose stabilizers lie in any fixed smaller
neighborhood of $N$ tends to one.

To model an element $q\in Q$, choose a representative $\gamma\in\Gamma$.
Starting from a selected coset $x$, move to $\gamma x$ and then move
it slightly so that its stabilizer returns to the transversal.
For stabilizers sufficiently close to $N$, uniqueness of this
correction makes successive moves obey multiplication in $\Gamma$;
elements of $N$ fix the point, while different cosets of $N$ send it
to different points. For any fixed finite subset of $Q$, these rules
hold, and the required moves stay in our finite set, on a proportion
of points tending to one. Extending the partially defined moves to
permutations therefore gives sofic approximations of $Q$, a
contradiction when $Q$ is non-sofic.

\textbf{Trees.}
Here a graph construction replaces the local transversal. Starting
with a finitely generated non-sofic group $Q$, we construct a regular
or biregular graph $Y$ on which $Q$ acts freely with finitely many
vertex orbits. Its local unlabelled structure encodes the labelled
Cayley graph of $Q$. The covering $Y\to Q\backslash Y$ realizes
$Y=N\backslash T$ with $N\triangleleft\Gamma$, where $\Gamma$ is a
free uniform tree lattice and $\Gamma/N\cong Q$. Co-soficity of the
induced IRS would yield finite graph approximations to $Y$. To obtain
these from arbitrary approximating lattices, we discard vertices
with nontrivial local stabilizer action and use finite covolume to
extract finite components with uniform vertex weights. Local decoding
then gives finite approximations to the Cayley graph of $Q$, again
contradicting non-soficity.

\textbf{Non-Archimedean groups.}
A rank-one group over a non-Archimedean local field acts on its
Bruhat--Tits tree with compact vertex stabilizers and a type-preserving
subgroup transitive on edges. These properties let us realize the
finite quotient graph of the preceding construction by a free uniform
lattice in the group. Passing to its image in the tree automorphism
group and inducing there would preserve co-soficity, so the tree
argument gives the same contradiction. This works in both zero and
positive characteristic.

\medskip

{After the preliminaries in Section~\ref{sec:prelim},
Section~\ref{sec:lie} proves the Lie-group transfer theorem and
its rank-one applications. Sections~\ref{sec:graphs}--\ref{sec:padic}
then treat the regular and biregular graph encodings, tree lattices,
and rank-one groups over non-Archimedean local fields.}

\section{Approximation and induction}\label{sec:prelim}

All locally compact groups considered below are second countable.
Weak convergence of IRSs refers to the compact Chabauty space.
All graph degrees count edge multiplicity. The finite graphs used in
approximations may be disconnected.

\subsection{Finite labelled approximations}
Fix a finite generating set $S=\{s_1,\ldots,s_r\}$ of a group $Q$,
with $1_Q\notin S$.
The right Cayley graph $\Cay(Q,S)$ is the directed labelled
graph with vertex set $Q$ and an edge labelled $i$ from $q$ to
$qs_i$ for each $q\in Q$ and $1\leq i\leq r$. Similarly, permutations
$\sigma_1,\ldots,\sigma_r\in\Sym(V)$ of a finite nonempty set $V$
define a directed labelled multigraph $X$ by placing an edge labelled
$i$ from $v$ to $\sigma_i(v)$.

For such a graph $X$, let $B_k(X,v)$ denote the rooted subgraph
induced by the vertices at distance at most $k$ from $v$, with distance
measured after forgetting edge directions. We retain all edge
directions and labels, including loops and multiple edges.
Isomorphisms of these balls are required to preserve the root,
directions and labels.

The group $Q$ is \emph{sofic} if there exist finite nonempty sets
$V_n$ and permutations $\sigma_{1,n},\ldots,\sigma_{r,n}\in\Sym(V_n)$
such that the associated graphs $X_n$ satisfy, for every integer
$k\geq1$,
\[
 \frac{\bigl|\{v\in V_n:
 B_k(X_n,v)\cong B_k(\Cay(Q,S),1_Q)\}\bigr|}{|V_n|}
 \longrightarrow 1\qquad\text{as }n\to\infty.
\]
Equivalently, for every $k\geq1$ and every $\varepsilon>0$, there is a finite
permutation graph $X$ in which at least a proportion $1-\varepsilon$
of the vertices have the required radius-$k$ ball. This condition is
independent of the chosen finite generating set.

We will repeatedly use the following elementary observation.

\begin{lemma}[Completion of partial permutations]\label{lem:completion}
Let $V_n$ be finite nonempty sets. For $1\leq i\leq r$, let
$\tau_{i,n}$ be a bijection between two subsets of $V_n$, and let
$D_n$ be the directed labelled graph with an edge labelled $i$ from
$v$ to $\tau_{i,n}(v)$ whenever the latter is defined. Suppose that,
for every $k\geq1$, all but $o(|V_n|)$ vertices $v$ satisfy
$B_k(D_n,v)\cong B_k(\Cay(Q,S),1_Q)$. Then $Q$ is sofic.
\end{lemma}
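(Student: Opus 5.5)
The plan is to complete each partial bijection $\tau_{i,n}$ to a genuine permutation $\sigma_{i,n}\in\Sym(V_n)$ in an arbitrary way, and then check that this changes the radius-$k$ balls only around a vanishing proportion of vertices. The completion is always possible. If $\tau_{i,n}\colon A_{i,n}\to B_{i,n}$ is a bijection between subsets of $V_n$, then $|A_{i,n}|=|B_{i,n}|$, so $|V_n\setminus A_{i,n}|=|V_n\setminus B_{i,n}|$. We may therefore extend $\tau_{i,n}$ by any bijection $V_n\setminus A_{i,n}\to V_n\setminus B_{i,n}$. Let $X_n$ be the permutation graph of the resulting $\sigma_{1,n},\ldots,\sigma_{r,n}$. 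Then $D_n$ is a subgraph of $X_n$ on the same vertex set, and the two graphs differ only in the new edges. Each new edge labelled $i$ starts at a vertex of $V_n\setminus A_{i,n}$, that is, at a vertex with no outgoing $i$-edge in $D_n$.

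Call a vertex \emph{defective} if, for some $i$, it has no outgoing or no incoming $i$-edge in $D_n$. Every new edge joins two defective vertices: its tail lies outside $A_{i,n}$ and its head lies outside $B_{i,n}$. Moreover, a vertex $v$ with $B_1(D_n,v)\cong B_1(\Cay(Q,S),1_Q)$ is not defective, because in the Cayley graph the root has exactly one outgoing and one incoming edge of each label. Applying the hypothesis with $k=1$, the defective vertices therefore number $o(|V_n|)$.

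Now fix $k$. I claim that if $B_{k+1}(D_n,v)\cong B_{k+1}(\Cay(Q,S),1_Q)$, then $B_k(X_n,v)=B_k(D_n,v)$, and hence $B_k(X_n,v)\cong B_k(\Cay(Q,S),1_Q)$. By the isomorphism, every vertex of $D_n$ within distance $k$ of $v$ has all $2r$ labelled half-edges present in $D_n$, so none of these vertices is defective. A new edge has defective endpoints, so it can touch no vertex of $B_k(D_n,v)$.

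The remaining point is to see that $X_n$ acquires no new vertices within distance $k$ of $v$ and no new edges between such vertices. Argue by induction along a shortest path in $X_n$ from $v$. A path of length at most $k$ leaving $v$ can use a new edge only by first reaching an endpoint of it. Before that moment the path runs inside $D_n$ and stays within distance $k$ of $v$. So that endpoint would be a non-defective vertex, which is a contradiction. Hence the radius-$k$ balls in $X_n$ and in $D_n$ coincide, including their induced edges, since any extra induced edge would be a new edge at a non-defective vertex.

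Applying the hypothesis with $k+1$, all but $o(|V_n|)$ vertices of $X_n$ have the correct radius-$k$ ball, for every $k$. The graphs $X_n$ thus witness that $Q$ is sofic in the sense of the definition above.

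I expect the only delicate step to be this locality check. It must cover loops and multiple edges, and it must account for the fact that the ball is an induced subgraph, so that no new edge can join two vertices that were already in the ball. The argument is insensitive to this, because every new edge is incident to a defective vertex and no defective vertex lies in the ball.
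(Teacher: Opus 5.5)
Your proof is correct and follows the paper's argument. You complete each $\tau_{i,n}$ by an arbitrary bijection between the complements of its domain and range, note that agreement of the radius-$(k+1)$ ball of $D_n$ with the Cayley ball forces every vertex within distance $k$ of $v$ to carry all its labelled edges, and conclude that the completion leaves the radius-$k$ ball unchanged. Your defective-vertex bookkeeping and path induction only spell out the paper's one-line locality claim, and the $o(|V_n|)$ count of defective vertices from the $k=1$ hypothesis is true but not needed.
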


\begin{proof}
The complements of the domain and range of $\tau_{i,n}$ have equal
cardinality. Choose an arbitrary bijection between them, obtaining
$\sigma_{i,n}\in\Sym(V_n)$. Fix $k\geq1$. If the radius-$(k+1)$
ball of $D_n$ at $v$ agrees with the Cayley graph, then every vertex
within distance $k$ of $v$ already has all its incoming and outgoing
labelled edges. Completion therefore leaves the radius-$k$ ball at
$v$ unchanged. The hypothesis at radius $k+1$ now gives the required
convergence at radius $k$.
\end{proof}

Let $Y$ be a connected, locally finite, unlabelled, undirected
multigraph on which $Q$ acts freely with finitely many orbits.
Choose vertices $y_1,\ldots,y_m$,
one from each vertex orbit, and choose the root uniformly from this
finite set. The resulting distribution on rooted graphs is
\[
 \nu_Y=\frac1m\sum_{i=1}^m\delta_{(Y,y_i)},
\]
where $(Y,y_i)$ denotes the rooted isomorphism class of $Y$ with root
$y_i$. This distribution does not depend on the chosen representatives,
since vertices in the same orbit are related by a graph automorphism.

We call $\nu_Y$ \emph{graph-sofic} if there is a sequence of finite
unlabelled, undirected multigraphs $Y_n$ of uniformly bounded degree with the following
property: for every fixed radius $k$, the distribution of the rooted
radius-$k$ ball around a uniformly chosen vertex of $Y_n$ converges
to the distribution of the radius-$k$ ball under $\nu_Y$. This is
convergence in the local weak topology.

\subsection{Lattice approximation}
The closed-convex-hull formulation of co-soficity agrees with the
definition using IRSs supported on lattices: such measures decompose
into lattice orbit measures, and their barycentres are approximated
by finite convex combinations. We will use the following familiar
extremality observation.

\begin{lemma}\label{lem:ergodic}
If an ergodic IRS $\mu$ is co-sofic, there is a sequence of lattices
$\Lambda_n<G$ such that $\mu_{\Lambda_n}\to\mu$.
\end{lemma}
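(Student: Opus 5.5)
Since $\mu$ is co-sofic, it lies in the weak-$*$ closure of the convex hull of the lattice IRSs. Hence there are finite convex combinations
\[
 \nu_n=\sum_{j} t_{n,j}\,\mu_{\Lambda_{n,j}}\longrightarrow\mu,
\]
with $t_{n,j}\geq0$ and $\sum_j t_{n,j}=1$. The plan is to pass from these mixtures to single lattices. I would use that $\mu$ is an extreme point of the compact convex set $\IRS(G)$: an ergodic IRS is extremal, since a nontrivial decomposition $\mu=t\alpha+(1-t)\beta$ would give an invariant density $d\alpha/d\mu$ that is not a.e.\ constant. I would also use that $\IRS(G)$ is a compact metrizable convex subset of probability measures on $\Sub(G)$. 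The standard tool is Milman's converse to Krein--Milman. Let $K$ be the closure of the set of lattice IRSs, which is compact. The closed convex hull of $K$ contains $\mu$, and $\mu$ is extreme in it, being extreme in all of $\IRS(G)$. Milman's theorem then says that every extreme point of the closed convex hull of a compact set $K$ (in a locally convex space) lies in $K$. So $\mu\in K$, and by metrizability there is a sequence $\mu_{\Lambda_n}\to\mu$.

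I would also like a hands-on version of the same step, which avoids quoting Milman. Fix a convex open neighbourhood $U$ of $\mu$, built from finitely many continuous test functions on $\Sub(G)$. Suppose no lattice IRS lies in some smaller neighbourhood $W\subset U$. Then every lattice IRS lies in the compact set $K\setminus W$. Cover $K\setminus W$ by finitely many compact convex sets $C_1,\ldots,C_m$, each of small diameter and avoiding $\mu$. The convex hull of $C_1\cup\cdots\cup C_m$ is compact and contains every $\nu_n$, hence also $\mu$. Therefore $\mu=\sum_i s_i c_i$ with $c_i\in C_i$. Extremality forces $\mu=c_i$ for some $i$, contradicting $\mu\notin C_i$. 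Taking a countable basis of neighbourhoods then yields the sequence $\Lambda_n$.

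The only point needing care is the extremality of ergodic IRSs. Take $\mu=t\alpha+(1-t)\beta$ with $0<t<1$. Then $\alpha\ll\mu$, and the Radon--Nikodym derivative $d\alpha/d\mu$ is conjugation-invariant because $\alpha$ and $\mu$ are both invariant. By ergodicity it is $\mu$-a.e.\ constant, so $\alpha=\mu$. I expect no real obstacle: metrizability of $\IRS(G)$ follows from second countability of $G$, which makes $\Sub(G)$ compact metrizable.
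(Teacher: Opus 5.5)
Your proposal is correct and follows essentially the same route as the paper. Like the paper, you apply Milman's converse to the Krein--Milman theorem to the compact set $K=\overline{\mathcal L}$ of limits of lattice IRSs, use that an ergodic IRS is extreme in $\IRS(G)$ and hence in the closed convex hull, and then extract a sequence by metrizability; your covering argument is just the standard proof of Milman's theorem, and your Radon--Nikodym argument spells out the extremality step that the paper only asserts.
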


\begin{proof}
Let $\mathcal L$ be the set of lattice IRSs and put
$C=\overline{\operatorname{conv}}(\mathcal L)\subset\IRS(G)$.
This is a compact convex set in the space of signed measures on
$\Sub(G)$ with the weak-* topology. Co-soficity gives $\mu\in C$,
and ergodicity makes $\mu$ extreme in $\IRS(G)$, hence also in $C$.
Milman's converse to the Krein--Milman theorem
\cite[Proposition~1.5]{Phelps01} gives
$\operatorname{ext}(C)\subset\overline{\mathcal L}$.
Thus $\mu\in\overline{\mathcal L}$, and metrizability yields the
required sequence.
\end{proof}

\subsection{\texorpdfstring{{Induction and compact quotients}}{Induction and compact quotients}}
If a closed subgroup $H<G$ has finite invariant covolume, induction
of an IRS $\eta$ of $H$ is defined by averaging $g_*\eta$ over
$G/H$, where $g_*$ denotes conjugation. Invariance of $\eta$ makes
this independent of the chosen section.

\begin{lemma}\label{lem:induction}
Suppose that $H<G$ is closed and cocompact and that $G/H$ has a
$G$-invariant probability measure. Then induction
$\IRS(H)\to\IRS(G)$ is continuous and preserves co-soficity.
Induction is transitive along chains of finite-covolume subgroups.
{If $\pi:\widetilde G\to G$ is a quotient homomorphism with compact
kernel, the pushforward $L\mapsto\pi(L)$ on IRSs is continuous
and preserves co-soficity.}
\end{lemma}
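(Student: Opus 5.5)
We need three things: continuity of induction, preservation of co-soficity (with transitivity), and the corresponding statements for pushforward along a compact-kernel quotient.

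\emph{Continuity of induction.} Write $\Ind_H^G\eta=\int_{G/H} g_*\eta\dd m(gH)$. Since $G/H$ is compact, choose a Borel section $s:G/H\to G$ with relatively compact image $K\subset G$. The conjugation action $G\times\Sub(G)\to\Sub(G)$ is continuous, so the family of maps $L\mapsto gLg^{-1}$, $g\in K$, is equicontinuous on the compact space $\Sub(G)$. We also use that inclusion $\Sub(H)\to\Sub(G)$ is a topological embedding because $H$ is closed. Now fix $f\in C(\Sub(G))$. The function $F(L)=\int_{G/H} f(s(x)Ls(x)^{-1})\dd m(x)$ is continuous on $\Sub(H)$ by equicontinuity and dominated convergence. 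Since $\langle\Ind\eta,f\rangle=\langle\eta,F\rangle$, weak-* continuity follows. The map is clearly affine.

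\emph{Co-soficity under induction.} Induction sends lattice IRSs of $H$ to lattice IRSs of $G$: if $\Lambda<H$ is a lattice, then $\Lambda$ is a lattice in $G$, because $G/\Lambda$ fibres over the finite-measure space $G/H$ with finite-measure fibres $H/\Lambda$, using Weil's formula and unimodularity compatibility. Moreover, $\Ind_H^G\mu_\Lambda=\mu_\Lambda^G$ by Fubini for $G/\Lambda\to G/H$. Since induction is continuous and affine, it maps $\overline{\operatorname{conv}}$ of lattice IRSs of $H$ into that of $G$. Transitivity along chains $K<H<G$ of finite covolume follows the same way: decompose $m_{G/K}$ as the iterated integral over $G/H$ of translates of $m_{H/K}$ (Weil), then apply Fubini to the defining integrals.

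\emph{Compact-kernel quotient.} Let $C=\ker\pi$ be compact. The map $L\mapsto\pi(L)$, $\Sub(\widetilde G)\to\Sub(G)$, is continuous. To see this, note that $\pi$ is proper, and Chabauty convergence can be checked by the two conditions: limits of points lie in the limit, and every point of the limit is approximated. Properness lets us lift sequences $\pi(\ell_n)\to x$ to convergent subsequences of $\ell_n c_n$, after adjusting by elements of $C$. It is also equivariant, since $\pi(gLg^{-1})=\pi(g)\pi(L)\pi(g)^{-1}$, so pushforward maps IRSs to IRSs continuously and affinely. For a lattice $\Lambda<\widetilde G$, the image $\pi(\Lambda)$ is discrete, since $\Lambda C$ is closed and $\Lambda\cap C$ is finite. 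It has finite covolume, since $\widetilde G/\Lambda\to G/\pi(\Lambda)$ is a surjection pushing finite invariant measure to finite invariant measure. Then $\pi_*\mu_\Lambda=\mu_{\pi(\Lambda)}$, because the uniform random coset in $\widetilde G/\Lambda$ maps to the uniform one in $G/\pi(\Lambda)$. Again affine continuity gives preservation of the closed convex hull.

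The main obstacle I expect is the continuity of $L\mapsto\pi(L)$ in the Chabauty topology. Images of closed subgroups under a proper map are closed, but the Chabauty lower-semicontinuity direction needs the properness argument above done carefully. The lattice-measure bookkeeping in Weil's formula is routine.
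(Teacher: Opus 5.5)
Your proposal is correct and follows the paper's own argument: dominated convergence with a relatively compact section, a lattice of $H$ being a lattice of $G$ with induced IRS equal to its $G$-lattice IRS, disintegration (Weil) for transitivity, and properness of $\pi$ for Chabauty continuity and for images of lattices being lattices. You simply write these steps out in more detail. Two phrases should be tightened. Discreteness of $\pi(\Lambda)$ comes from properness, since the preimage of a compact identity neighbourhood meets $\Lambda$ in a finite set, or from closedness plus countability via Baire, not from finiteness of $\Lambda\cap C$. In the Chabauty step no adjustment by elements of $C$ is needed, because the $\ell_n$ themselves already lie in a compact preimage.
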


\begin{proof}
For a continuous function $f$ on $\Sub(G)$, the integral defining
induction is continuous in $\eta$ by dominated convergence, after
choosing a measurable section with relatively compact image.
Such a section exists because $G/H$ is compact. A lattice in $H$
is a lattice in $G$, and induction of its lattice IRS is its
$G$-lattice IRS. This proves preservation of co-soficity.
Transitivity follows by disintegration of invariant measures.

A quotient homomorphism with compact kernel induces a continuous
map $\Sub(\widetilde G)\to\Sub(G)$, $L\mapsto\pi(L)$: compactness
of the kernel gives the necessary compact lifts in the Chabauty
criterion. {Properness of $\pi$ implies that the image of a
discrete subgroup is discrete. Pushing forward the finite invariant measure
on a lattice quotient shows that images of lattices are lattices.
For a finite kernel, full inverse images of lattices are also lattices.}
The asserted preservation follows.
\end{proof}

If $N\triangleleft\Gamma$ and $\Gamma$ is cocompact, the map
$G/\Gamma\to\Sub(G)$ in \eqref{eq:induced} is continuous.
Its image is exactly the conjugacy orbit of $N$ and is compact.
The resulting IRS is ergodic, as a factor of the transitive
probability $G$-space $G/\Gamma$.

\begingroup
\section{Rank-one Lie groups}\label{sec:lie}

We first establish a transfer theorem for real Lie groups.

\subsection{\texorpdfstring{{A transfer theorem}}{A transfer theorem}}
{The next theorem justifies, under certain conditions, inducing
the Dirac measure on the kernel of a non-sofic quotient.\par}
\endgroup

\begingroup
\begin{proposition}[Transfer from a normal subgroup of a uniform lattice]
\label{thm:lie-transfer}
Let $G$ be a second countable real Lie group, let $\Gamma<G$ be a uniform
lattice, and let $N\triangleleft\Gamma$. Suppose that $C_G(N)$ is discrete
and that $Q=\Gamma/N$ is finitely generated. If
$\mu=\operatorname{Ind}_{\Gamma}^{G}\delta_N$ is co-sofic, then $Q$ is sofic.
\end{proposition}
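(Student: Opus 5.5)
We follow the sketch in the introduction. Since the induced IRS $\mu$ is ergodic, Lemma~\ref{lem:ergodic} gives lattices $\Lambda_n<G$ with $\mu_{\Lambda_n}\to\mu$. The support of $\mu$ is the compact orbit $\{gNg^{-1}:g\in G\}$.

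\emph{Local transversal.} Since $C_G(N)$ is discrete, $N$ has finitely many elements $\gamma_1,\dots,\gamma_\ell$ whose joint centralizer is discrete. (By Noetherianity of centralizer chains in Lie groups, finitely many elements already give the same Lie algebra of the centralizer, namely zero.) The map $G\to G^\ell$, $g\mapsto(g\gamma_jg^{-1}\gamma_j^{-1})_j$, therefore has injective differential at $1$. Choose a submanifold complement $\Sigma\subset G^\ell$ through $(\gamma_j)$. By the inverse function theorem, a nearby tuple $(h_j)$ is uniquely of the form $(g\sigma_jg^{-1})$ with $g$ in a small neighbourhood $U$ of $1$ and $(\sigma_j)\in\Sigma$. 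For a discrete subgroup $L$ Chabauty-close to $N$, the elements of $L$ near $\gamma_j$ are unique, by a Zassenhaus-type uniform discreteness near $N$. Indeed, $N$ is uniformly discrete and nearby lattices have small injectivity-radius defects on the $\mu_{\Lambda_n}$-typical set, so one should control this via Chabauty closeness. Call $L$ \emph{normalized} if its tuple lies in $\Sigma$. Every $L$ in a Chabauty neighbourhood $\mathcal W$ of $N$ is then $uL'u^{-1}$ for a unique $u\in U$ and a unique normalized $L'$.

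\emph{Finite model.} Let $V_n\subset G/\Lambda_n$ be the set of cosets $x=g\Lambda_n$ whose stabilizer $g\Lambda_ng^{-1}$ is normalized and lies in $\mathcal W$. By uniqueness, the translates $U_0x$ for $x\in V_n$, with $U_0\subset U$ small, are disjoint, so $|V_n|\le\vol(G/\Lambda_n)/\vol(U_0)$ is finite. Since $\mu_{\Lambda_n}\to\mu$, the measure of $U_0V_n$ is comparable to $\mu(\mathcal W)\vol(U_0)^{-1}$ times the normalization. Weak convergence applied to $U_0\times\{\text{normalized, close to }N\}$ then shows that, for every smaller neighbourhood $\mathcal W'$, the fraction of $x\in V_n$ with stabilizer in $\mathcal W'$ tends to $1$. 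Here we use that the normalized slice near $N$ is a transversal for the conjugation flow with a product structure.

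\emph{Moves.} For a generator $s_i$ of $Q$ lifted to $\gamma\in\Gamma$, define $\tau_i(x)=u^{-1}\gamma x$, where $u\in U$ is the unique correction making the stabilizer of $u^{-1}\gamma x$ normalized. This is defined when $\gamma\,\mathrm{Stab}(x)\gamma^{-1}\in\mathcal W$, which holds if $\mathrm{Stab}(x)$ is close to $N$, because $\gamma$ normalizes $N$. The map $\tau_i$ is injective by uniqueness, since $\tau_i$ has the partial inverse built from $\gamma^{-1}$.

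\emph{Verifying the balls.} For a word $w$ of length at most $2k$, compose the moves. When all intermediate stabilizers are close to $N$, the composite correction is close to the correction that applies $w$ directly, and uniqueness forces the two to agree. If $w$ represents an element of $N$, that element nearly normalizes the stabilizer, which is close to $N$, so the correction is trivial and $x$ is fixed. If $w\notin N$, then $w$ moves $N$ to itself but $w\notin N$, and distinct cosets give distinct points. The reason is that $wx=x$ up to $U$ would force $w$ to lie near the stabilizer, and hence near $N$ in Chabauty terms, intersected with a discrete set near $w$. We avoid this by choosing $\mathcal W'$ to exclude the finitely many relevant elements.

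With this, the fraction of good radius-$k$ balls tends to $1$, and Lemma~\ref{lem:completion} gives soficity of $Q$. The main obstacle is the uniqueness and consistency of corrections: composite moves must obey the multiplication of $\Gamma$ modulo $N$, which needs a uniform inverse-function neighbourhood together with Chabauty control of finitely many elements.
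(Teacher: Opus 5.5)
Your outline follows the paper's proof: ergodic approximation via Lemma~\ref{lem:ergodic}, a slice cut out by finitely many elements of $N$ whose $\operatorname{Ad}$-fixed spaces intersect trivially, finite sets of normalized cosets, corrected moves $x\mapsto u^{-1}\gamma x$, and completion of partial bijections. Using Cayley balls with Lemma~\ref{lem:completion} instead of the paper's $F\cup F^2$ criterion makes no difference. However, two verifications that carry the argument are wrong or missing.

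\textbf{Trivial action of $N$.} You argue that $n\in N$ nearly normalizes $H_x$, so the correction is trivial and $x$ is fixed. This is a non sequitur. The correction is in general not trivial. Moreover, landing on a point with the same normalized stabilizer is not landing on $x$: an element normalizing $H_x$ without lying in it has trivial correction yet moves $x$. What you need is the following. Since $H_x$ is Chabauty-close to $N$, it contains a unique element $h_n(H_x)$ near $n$. Then $h_n(H_x)n^{-1}$ is small and conjugates $nH_xn^{-1}$ back to the normalized $H_x$. By uniqueness it \emph{is} the correction, so the corrected move is $h_n(H_x)x=x$.

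The existence and uniqueness of these nearby elements also need a real argument. It is a deterministic fact about all closed subgroups near $N$, not a statement about $\mu_{\Lambda_n}$-typical points. It holds because subgroups near $N$ miss a compact annulus $\exp\{\rho\le\|X\|\le2\rho\}$ that is disjoint from $N$, whereas a nontrivial $\exp X$ with $\|X\|<\rho$ would have a power in that annulus.

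\textbf{Concentration.} You assert this step rather than prove it, and the comparison with $\mu(\mathcal W)\vol(U_0)^{-1}$ is not the right quantity. Write $\Phi(u,K)=uKu^{-1}$ for $u\in U_0$ and $K$ normalized. The disjoint decomposition of $U_0V_n$ gives the exact identity
\[
\mu_{\Lambda_n}\bigl(\Phi(U_0\times W)\bigr)=\vol(U_0)\,|V_n(W)|/\vol(G/\Lambda_n),
\]
so $|V_n(W')|/|V_n(W)|$ is a ratio of $\mu_{\Lambda_n}$-masses. For this ratio to tend to $1$, you need these boxes to be $\mu$-continuity sets with \emph{equal} positive $\mu$-mass, and the product structure alone does not give that.

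The paper obtains it from the fact that the support $N^G$ meets the slice near $N$ only at $N$, and this is where cocompactness of $\Gamma$ enters. Since $N_G(N)\supseteq\Gamma$, the quotient $G/N_G(N)$ is compact, so its injective orbit map onto $N^G$ is a homeomorphism. Hence orbit points near $N$ have the form $uNu^{-1}$ with $u$ small, and uniqueness in the slice gives $\Phi(U_0\times W)\cap N^G=\Phi(U_0\times\{N\})$ for all small $W$. Choosing $U_0$ with Haar-null boundary then gives the continuity-set property.
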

\endgroup

\begin{proof}
By ergodicity of $\mu$ and Lemma~\ref{lem:ergodic}, there are lattices
$\Lambda_i<G$ such that $\nu_i:=\mu_{\Lambda_i}\to\mu$.
Let $X_i=G/\Lambda_i$, equipped with its invariant probability measure
$\lambda_i$. We use the left action of $G$ on $X_i$ and write
\[
 H_x=\operatorname{Stab}_G(x),\qquad
 H_{g\Lambda_i}=g\Lambda_i g^{-1}.
\]
Thus the distribution of $H_x$, for $x$ chosen according to $\lambda_i$,
is $\nu_i$. The lattices $\Lambda_i$ are not necessarily uniform or
torsion-free. Fix a Haar measure $m$ on $G$ and write
$v_i=\vol(G/\Lambda_i)$ for the corresponding covolume. Recall that $G$
is unimodular, since it admits a lattice.

We will extract finite sets from the spaces $X_i$ and construct almost
actions of $Q$ on them. More precisely, we use the following equivalent
formulation of soficity: for every finite $F\subset Q$ containing $1$ and
every $\varepsilon>0$, there must be a finite nonempty set $Z$ and
permutations $\sigma(q)$, $q\in F\cup F^2$, with
$\sigma(1)=\mathrm{id}$, such that at least $(1-\varepsilon)|Z|$ points
$x\in Z$ satisfy, simultaneously,
\[
 \sigma(q)\sigma(r)x=\sigma(qr)x\quad(q,r\in F),
 \qquad
 \sigma(q)x\ne\sigma(r)x\quad(q,r\in F,\ q\ne r).
\]
The finite sets will consist of points whose stabilizers lie in a local
transversal to conjugation. We first construct this transversal and
explain how convergence of the IRSs controls the counting measure on
these finite sets.

\smallskip
\noindent\textbf{1. A local slice in the space of subgroups.}
Since $C_G(N)$ is discrete and the Lie algebra of $G$ is finite-dimensional,
we can choose $n_1,\ldots,n_k\in N$ such that
\[
 \bigcap_{j=1}^k\ker(\operatorname{Ad}(n_j)-1)=0.
\]
This means that the simultaneous conjugation orbit of
$\mathbf n=(n_1,\ldots,n_k)$ in $G^k$ has dimension $\dim G$.

We need to follow these elements as the subgroup varies. First, all
subgroups sufficiently close to $N$ avoid a fixed identity neighborhood
apart from $1$. To see this, choose $\rho>0$ so small that the exponential
map is injective on the closed ball of radius $2\rho$ in the Lie algebra,
and its image meets $N$ only at $1$. Nearby subgroups avoid the compact
annulus
\[
 \{\exp X:\rho\leq\|X\|\leq2\rho\}.
\]
If such a subgroup contained $\exp X$ with $0<\|X\|<\rho$, some positive
integral power of this element would belong to the annulus, a
contradiction. This proves the required uniform discreteness.

Consequently, for $H$ near $N$, there is a unique element $h_j(H)\in H$
in a sufficiently small fixed neighborhood of $n_j$. Existence follows
from Chabauty convergence, and uniqueness from uniform discreteness.
These elements depend continuously on $H$. Moreover, for $u$ close to
$1$ and $H$ close to $N$, uniqueness gives
\[
 h_j(uHu^{-1})=u h_j(H)u^{-1}.
\]
Put $f(H)=(h_1(H),\ldots,h_k(H))$.

Choose a smooth transversal $S\subset G^k$ through $\mathbf n$ to its
simultaneous conjugation orbit. The inverse function theorem gives a
unique expression $f(H)=u\cdot\mathbf s$ for nearby tuples, where $u$ is
close to $1$, $\mathbf s\in S$, and $u\cdot\mathbf s$ denotes simultaneous
conjugation. Define
\[
 \Sigma=\{H\text{ near }N:f(H)\in S\},
\]
with the subspace topology from $\Sub(G)$. This is a family of subgroups,
and it need not be a manifold. Equivariance of $f$ shows that
$K=u^{-1}Hu$ satisfies $f(K)=\mathbf s$, so $K\in\Sigma$.
Uniqueness of the tuple coordinates gives uniqueness of $u$ and $K$.
These coordinates depend continuously on $H$; after shrinking
neighborhoods, they give a homeomorphism
\[
 \Phi:U_0\times\Sigma\longrightarrow\mathcal B\subset\Sub(G),
 \qquad (u,K)\longmapsto uKu^{-1},
\]
onto an open neighborhood of $N$. Here $U_0$ is an identity neighborhood
in $G$. For $H=uKu^{-1}\in\mathcal B$, set
\[
 c(H)=u^{-1},\qquad c(H)Hc(H)^{-1}=K\in\Sigma.
\]
Thus $c(H)$ is the unique sufficiently small correction taking $H$ into
the transversal, and $c(K)=1$ for $K\in\Sigma$. Only conjugations close
to the identity are involved in this uniqueness assertion.

We also arrange that $\Sigma$ meets the orbit of $N$ only at $N$.
Indeed, let $L=N_G(N)$. For each $n\in N$, the continuous map
$l\mapsto lnl^{-1}$ sends $L^\circ$ into the discrete set $N$, so it is
constant. Hence $L^\circ$ centralizes $N$, and $L$ is discrete.
Since $L\supset\Gamma$, the space $G/L$ is compact. Its continuous
injective map into $\Sub(G)$ is therefore an embedding with image $N^G$.
Every point of this orbit sufficiently close to $N$ is thus $uNu^{-1}$
with $u\in U_0$. Uniqueness of the local product implies, after shrinking
$\Sigma$,
\[
 \Sigma\cap N^G=\{N\}.
\]

\smallskip
\noindent\textbf{2. Counting points on the transversal.}
Choose a relatively compact open identity neighborhood $U$ with
$\overline U\subset U_0$ and $m(\partial U)=0$. For an open neighborhood
$W\ni N$ in $\Sigma$ whose closure is compact and contained in $\Sigma$,
put
\[
 B(W)=\Phi(U\times W),\qquad
 Z_i(W)=\{x\in X_i:H_x\in W\}.
\]
Here $W$ consists of subgroups, whereas $Z_i(W)$ consists of points of
$X_i$. We do not assert that every $H_x$ belongs to $W$. Nor is $W$
restricted to conjugates of $N$: it can contain lattices near $N$.

The local product gives an exact decomposition
\[
 \{y\in X_i:H_y\in B(W)\}
 =\bigsqcup_{x\in Z_i(W)}Ux.
\]
To see that these sets cover the left-hand side, write
$H_y=uKu^{-1}$ with $u\in U$ and $K\in W$. Then $x=u^{-1}y$ satisfies
$H_x=K$, so $x\in Z_i(W)$ and $y\in Ux$. To see disjointness, suppose
$ux=vy$ with $x,y\in Z_i(W)$ and $u,v\in U$. Taking stabilizers gives
$\Phi(u,H_x)=\Phi(v,H_y)$. Product uniqueness implies $u=v$, and then
$x=y$. The same argument shows that $u\mapsto ux$ is injective on $U$.

Each set $Ux$ consequently has quotient Haar volume $m(U)$, or
$\lambda_i$-measure $m(U)/v_i$. Since $X_i$ has finite volume, $Z_i(W)$
is finite, and the decomposition gives
\[
 \nu_i(B(W))=\frac{m(U)}{v_i}\,|Z_i(W)|.
\]
This identity converts convergence of the IRSs into information about
ordinary, unweighted counts of points.

\smallskip
\noindent\textbf{3. Concentration near $N$.}
The measure $\mu$ is supported on $N^G$. For every $W$ as above,
\[
 B(W)\cap N^G=\Phi(U\times\{N\}).
\]
Thus all these sets have the same positive $\mu$-measure, say $b$.
They are also $\mu$-continuity sets. Indeed, their closures stay within
the larger product chart, and
\[
 \partial B(W)\cap N^G=\Phi(\partial U\times\{N\}).
\]
This has $\mu$-measure zero because $m(\partial U)=0$ and the orbit
measure is induced by Haar measure on $G/L$.

It follows that $\nu_i(B(W))\to b>0$, so $Z_i(W)$ is nonempty for all
sufficiently large $i$. More importantly, if $W'\subset W$ is any smaller
neighborhood of $N$ with compact closure in $W$, the counting identity
gives
\begin{equation}\label{eq:transversal-concentration}
 \frac{|Z_i(W')|}{|Z_i(W)|}
 =\frac{\nu_i(B(W'))}{\nu_i(B(W))}
 \longrightarrow1.
\end{equation}
In words, almost every point of the finite set $Z_i(W)$ has its stabilizer
in the smaller neighborhood $W'$. This will allow us to make all the
local constructions below simultaneously valid on most of $Z_i(W)$.

\smallskip
\noindent\textbf{4. Holonomy: returning to the transversal.}
Fix $\gamma\in\Gamma$. If $H_x\in\Sigma$ is sufficiently close to $N$,
then $\gamma H_x\gamma^{-1}$ belongs to $\mathcal B$, since $\gamma$
normalizes $N$. Define
\[
 b_\gamma(H)=c(\gamma H\gamma^{-1}),\qquad
 T_\gamma x=b_\gamma(H_x)\gamma x.
\]
The first operation moves $x$ by $\gamma$; the correction then returns
its stabilizer to $\Sigma$. Explicitly,
\[
 H_{T_\gamma x}
 =b_\gamma(H_x)\gamma H_x\gamma^{-1}b_\gamma(H_x)^{-1}\in\Sigma.
\]
This stabilizer depends continuously on $H_x$, and the corresponding
map of subgroups fixes $N$. Also $b_\gamma(H)\to1$ as $H\to N$.
All the neighborhoods used here depend on subgroups and on $\gamma$,
not on $i$.

These return maps satisfy the group law locally. To check this, write
\[
 T_\gamma T_\eta x
 =\bigl[b_\gamma(H_{T_\eta x})\,
          \gamma b_\eta(H_x)\gamma^{-1}\bigr]\gamma\eta x.
\]
The bracketed element tends to $1$ as $H_x\to N$, and carries the
stabilizer of $\gamma\eta x$ into $\Sigma$. By uniqueness of the small
correction it equals $b_{\gamma\eta}(H_x)$. Therefore
$T_\gamma T_\eta=T_{\gamma\eta}$ near $N$. In particular,
$T_1=\mathrm{id}$ and $T_{\gamma^{-1}}$ is a local inverse of $T_\gamma$.

Elements of $N$ act trivially under these return maps. For a fixed
$n\in N$, let $h_n(H)\in H$ be its unique nearby representative, obtained
as in Step~1. The small element $h_n(H)n^{-1}$ conjugates $nHn^{-1}$
back to $H\in\Sigma$. Uniqueness gives
$b_n(H)=h_n(H)n^{-1}$, and hence
\[
 T_nx=h_n(H_x)x=x.
\]
On the other hand, distinct cosets act differently near $N$. If
$\gamma N\ne\eta N$ and $T_\gamma x=T_\eta x$ occurred for stabilizers
$H_x$ arbitrarily close to $N$, we could find $H_j\to N$ with
\[
 \bigl(b_\eta(H_j)\eta\bigr)^{-1}b_\gamma(H_j)\gamma\in H_j.
\]
Passing to the limit in the Chabauty topology would give
$\eta^{-1}\gamma\in N$, a contradiction. Thus the return maps give a
locally defined free action of the quotient $Q$, in the sense that each
prescribed finite collection of identities and inequalities holds on a
common neighborhood of $N$.

\smallskip
\noindent\textbf{5. Finite permutations.}
Fix a finite $F\subset Q$ containing $1$, and put $E=F\cup F^2$.
For each $q\in E$, choose a lift $\gamma_q\in\Gamma$, with $\gamma_1=1$.
The preceding local identities imply
\[
 T_{\gamma_q}T_{\gamma_r}=T_{\gamma_{qr}}
 \quad(q,r\in F),
\]
because $\gamma_{qr}^{-1}\gamma_q\gamma_r\in N$ and the return map
associated to this element is the identity. They also imply
$T_{\gamma_q}x\ne T_{\gamma_r}x$ for $q\ne r$ when $H_x$ is sufficiently
close to $N$.

Choose $W$ small enough that the finitely many maps $T_{\gamma_q}$,
$q\in E$, are defined and injective over $W$, and that the required
local identities apply near $N$. Since their induced maps of stabilizers
are continuous and fix $N$, there is a smaller neighborhood $W'\ni N$,
with compact closure in $W$, such that, whenever $H_x\in W'$, all the
points
\[
 T_{\gamma_q}x\quad(q\in E),\qquad
 T_{\gamma_q}T_{\gamma_r}x\quad(q,r\in F)
\]
have stabilizers in $W$, and the preceding identities and inequalities
hold at $x$. These choices are independent of $i$.

For large $i$, let $Z=Z_i(W)$. Restrict each $T_{\gamma_q}$ to those
points of $Z$ whose images also lie in $Z$. This is a bijection between
two subsets of the finite set $Z$, so it extends to a permutation
$\sigma_i(q)$ of $Z$; take $\sigma_i(1)=\mathrm{id}$.
For $x\in Z_i(W')$, our choice of $W'$ ensures that every intermediate
point needed below remains in $Z$. The extensions therefore agree with
the return maps at these points, giving
\[
 \sigma_i(q)\sigma_i(r)x
 =T_{\gamma_q}T_{\gamma_r}x
 =T_{\gamma_{qr}}x=\sigma_i(qr)x
 \quad(q,r\in F).
\]
Likewise $\sigma_i(q)x\ne\sigma_i(r)x$ for distinct $q,r\in F$.
By~\eqref{eq:transversal-concentration}, the proportion of points in
$Z_i(W')$ tends to one. Choosing $i$ large enough proves the
finite-permutation criterion stated at the beginning, for the given
$F$ and any $\varepsilon>0$. Hence $Q$ is sofic.
\end{proof}

\begingroup
\subsection{The proof of Theorem~\ref{thm:main}}\label{subsec:proof-rank-one}

\begin{proof}
First suppose that $G$ has trivial center, and choose a torsion-free
cocompact lattice $\Gamma<G$. The group $\Gamma$ is non-elementary
word-hyperbolic. By Ol'shanskii's SQ-universality theorem
\cite[Theorem~1]{Ols95}, every countable group embeds in a quotient
of $\Gamma$. Choose a finitely generated non-sofic group $P$
\cite[Theorem~1.3]{FF26}; see also \cite{OpenAI26,KT26}. There is
therefore a normal subgroup $N\triangleleft\Gamma$ such that
$Q=\Gamma/N$ contains $P$. Since soficity passes to subgroups, $Q$
is non-sofic; it is finitely generated because $\Gamma$ is.

The lattice $\Gamma$ is linear and residually finite, hence sofic.
Thus $N\ne\{1\}$, and torsion-freeness makes $N$ infinite. By
Borel density and normality, the Zariski closure of $N$ is a
positive-dimensional normal algebraic subgroup of the simple
adjoint group. Hence $N$ is Zariski dense and $C_G(N)$ is discrete.
Proposition~\ref{thm:lie-transfer} shows that
$\Ind_\Gamma^G\delta_N$ is non-co-sofic. It is discrete and ergodic,
and its support is the compact image of $G/\Gamma$.

For general $G$ with finite center, apply this construction to
$G/Z(G)$ and take the full inverse images of the lattice and its
normal subgroup in $G$. The resulting induced IRS is again discrete
and ergodic, with compact conjugacy orbit. If it were co-sofic, its
image in $G/Z(G)$ would be co-sofic by
Lemma~\ref{lem:induction}, a contradiction.
\end{proof}
\endgroup

\begingroup
\begin{remark}\label{rem:prescribed-quotient}
For real hyperbolic groups, one can prescribe the non-sofic quotient
$Q$. For every $n\geq2$, there is a torsion-free cocompact lattice
in $\operatorname{SO}^{+}(n,1)$ mapping onto $F_2$;
see \cite{Lub96} and \cite[pp.~1161--1162]{BGLM02}.
Passing to a finite-index subgroup and composing with a free-group
surjection onto $Q$ gives a uniform lattice $\Gamma$ and
$N\triangleleft\Gamma$ with $\Gamma/N\cong Q$, for any finitely
generated non-sofic $Q$. As in the proof of Theorem~\ref{thm:main},
$N$ is Zariski dense, so Proposition~\ref{thm:lie-transfer} makes the
induced IRS non-co-sofic, also in the full group
$\operatorname{SO}(n,1)$. Taking finite central inverse images gives
the same realization property for $\SL_2(\R)$.
\end{remark}
\endgroup

\section{\texorpdfstring{{Regular and biregular graph encodings}}{Regular and biregular graph encodings}}\label{sec:graphs}

{We now turn to trees and non-Archimedean local fields.
We first give a regular graph encoding, then extend it to arbitrary
biregular trees. Both encodings recover soficity of a group from finite
graph approximations of an unlabelled tree quotient.\par}

\begin{proposition}\label{prop:encoding}
Let $Q=\langle s_1,\ldots,s_r\rangle$ with $r\geq2$, and let
$d\geq3$. There is a connected bipartite $d$-regular multigraph
$Y_Q$ with a free cocompact action of $Q$ such that
\[
 \nu_{Y_Q}\text{ graph-sofic}\quad\Longrightarrow\quad Q\text{ sofic}.
\]
The quotient map $Y_Q\to B=Q\backslash Y_Q$ is a regular graph
covering with deck group $Q$.
\end{proposition}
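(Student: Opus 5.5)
We replace the labelled Cayley graph $\Cay(Q,S)$ by an unlabelled $d$-regular bipartite graph whose local geometry rigidly encodes the labels and directions. Concretely, we replace each vertex $q\in Q$ and each labelled directed edge by a finite gadget. Each vertex $q$ becomes a \emph{hub} gadget $A_q$. Each edge $q\to qs_i$ becomes a \emph{connector} gadget $C_{q,i}$. The connector $C_{q,i}$ is attached to an ``out-port'' of $A_q$ and an ``in-port'' of $A_{qs_i}$. The gadgets are finite bipartite multigraphs with all internal degrees filled up to exactly $d$, using multiple edges where convenient. $Q$ acts freely on $Y_Q$ by permuting the gadgets, since left multiplication preserves $\Cay(Q,S)$. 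There are finitely many orbits, and $B=Q\backslash Y_Q$ is the finite graph obtained by gluing one hub and $r$ connectors. The map $Y_Q\to B$ is then a regular covering with deck group $Q$. Bipartiteness is arranged by choosing the gadgets bipartite and checking that attachments respect a global $2$-colouring. For instance, every port vertex gets one colour and edges between gadgets join ports of opposite types. Since $\Cay(Q,S)$ may contain odd cycles, we subdivide inter-gadget edges through connectors of suitable parity.

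The gadgets carry distinguishing local signatures. Each connector for label $i$ contains a short pattern whose isomorphism type determines $i$ and is asymmetric between its two ends, so it determines direction. For example, one can use a chain of multi-edge bundles whose multiplicities encode $i$, with a marked asymmetric end such as a triple edge at the tail. Hub vertices have a different, recognizable local type. The key property is a \emph{decoding lemma}: there is $R$ such that, in $Y_Q$, the radius-$R$ ball of any vertex determines whether it is a hub root (a chosen central vertex of $A_q$), and which ports belong to which labels and directions. More generally, the radius-$(Rk+R)$ ball around a hub root determines the labelled radius-$k$ Cayley ball. I would design the gadgets first with degree $3$ and then pad degree to $d$ by adding parallel edges or attached bipartite $d$-regular-minus-one pieces. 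The padding must not destroy the signatures, since multiplicities are read off directly. Checking that these signatures cannot be mimicked by overlapping gadgets is the main obstacle. I would handle it by making hub cores contain a multiplicity pattern, such as an edge of multiplicity $d-1$, that occurs nowhere else.

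Given graph-sofic approximations $Y_n$, most vertices of $Y_n$ have radius-$R'$ neighbourhoods isomorphic to balls in $Y_Q$, for any fixed $R'$. Let $V_n$ be the set of vertices of $Y_n$ whose radius-$R$ ball is that of a hub root in $Y_Q$. Their density tends to the hub-root density $1/m$ of $\nu_{Y_Q}$, which is positive, so $|V_n|\asymp|Y_n|$. For $v\in V_n$ with a correct large ball, the decoding lemma identifies, for each $i$, the hub root reached by the outgoing $i$-connector. Define $\tau_{i,n}(v)$ to be this hub root whenever the decoding is locally consistent, meaning $v$ is correctly seen as the in-neighbour. This gives partial bijections, because an in-port of a hub decodes uniquely back to its tail. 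Every vertex of $Y_n$ with a good radius-$R(k+2)$ ball, and in particular all but $o(|Y_n|)=o(|V_n|)$ vertices of $V_n$, then satisfies $B_k(D_n,v)\cong B_k(\Cay(Q,S),1_Q)$. Lemma~\ref{lem:completion} yields that $Q$ is sofic. The assumption $r\ge2$ is used only to keep gadget degrees consistent and hubs distinguishable from connector interiors; if needed, I would add a dummy generator.
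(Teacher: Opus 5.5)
Your outer argument matches the paper's: anchors of positive density, partial bijections read off the connectors (injective because backward decoding from an in-port is deterministic), and Lemma~\ref{lem:completion}. That part is fine. The gap is that the substance of the proposition is never produced. That substance is an explicit $d$-regular bipartite gadget graph together with a bounded-radius decoding rule. You leave open, as ``the main obstacle'', exactly the check that signatures cannot be mimicked. The concrete mechanisms you suggest also fail, at least for $d=3$.
\begin{itemize}
\item Along a chain of bundles whose interior vertices have no other neighbours, $d$-regularity forces the multiplicities to alternate $a,d-a,a,\ldots$. So the multiplicities carry only the single parameter $a\in\{1,\ldots,d-1\}$ and cannot encode $r$ arbitrary labels.
\item When $d=3$, a triple edge is an entire connected component.
\item If your ``$d$-regular-minus-one'' padding pieces are attached by a single edge, they cannot exist. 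A finite bipartite piece with sides $X\ni x_0$ and $Y$, all degrees $d$ except $\deg x_0=d-1$, would satisfy $d|X|-1=d|Y|$.
\item Padding connector paths with parallel edges at $d=3$ puts double edges into every connector. This destroys your hub marker, ``an edge of multiplicity $d-1$ occurring nowhere else''.
\end{itemize}
So the decoding lemma, on which everything else rests, is unsupported as written.

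The paper avoids this by never reading multiplicities for decoding. Each hub is a cycle carrying ports $1^+,1^-,\ldots,r^+,r^-$ in cyclic order, and a port's label is its cyclic position. The port $1^+$ and the orientation are marked by replacing the cycle edges $1^+1^-$ and $r^-1^+$ by paths of lengths $3$ and $5$. Connectors are paths of length $7$ from $i^+$ at $q$ to $i^-$ at $qs_i$. Ports are exactly the vertices with three distinct neighbours, so every label and direction is recovered from path lengths in the underlying simple graph.

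Multiplicities serve only as padding. Every path has odd length and receives alternating multiplicities $a,d-a,\ldots,a$, with $a=1$ on cycle paths and $a=d-2$ on connectors. This gives interior degree $d$ and port degree $1+1+(d-2)=d$ for every $d\geq3$. Colouring the $\pm$ ports oppositely and using odd path lengths gives bipartiteness. In a finite approximation, recognized cycles are automatically disjoint, since at a port the two cycle paths are distinguished from the length-$7$ path by length. With such a concrete gadget in place, your final paragraph goes through essentially unchanged.
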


\begin{proof}
Replace each vertex $q$ of the Cayley graph by a cycle with ports
\[
 1^+,1^-,\ldots,r^+,r^-
\]
in this cyclic order. Join port $i^+$ at $q$ to port $i^-$ at
$qs_i$ by a path of length $7$. In each cycle, replace the edge
$1^+1^-$ by a path of length $3$ and the edge $r^-1^+$ by a path
of length $5$; leave the other cycle edges unsubdivided.
All path interiors are disjoint.

To make the graph $d$-regular, give each odd-length path alternating
edge multiplicities $a,d-a,a,\ldots,a$, taking $a=1$ for cycle paths
and $a=d-2$ for connecting paths. Interior vertices then have degree
$a+(d-a)=d$, and ports have degree $1+1+(d-2)=d$.
The graph is connected and bipartite: give the plus and minus ports
opposite colours and extend along the odd-length paths.
Left translation of the cycle indices gives the required free
$Q$-action with finite quotient, and the quotient map is a covering.

The construction can be decoded locally. Ports are exactly the
vertices with three distinct neighbours; path interiors have two.
Thus the paths between ports and their lengths are recognizable.
Length $7$ identifies the connecting paths. The remaining paths
form cycles, and the unique common port of the length-$3$ and
length-$5$ paths identifies $1^+$ and the cyclic orientation.
Hence every port label is recovered by a bounded-radius rule.

Suppose finite multigraphs $Z_n$ converge locally to $\nu_{Y_Q}$.
The cycles we retain in $Z_n$ are specified as follows, identifying
paths and cycles in the underlying simple graph. A retained cycle
has exactly $2r$ vertices with three distinct neighbours, called
its ports; its other vertices have exactly two distinct neighbours.
The paths along the cycle between successive ports have lengths
\[
 3,\underbrace{1,\ldots,1}_{2r-2\text{ times}},5
\]
in cyclic order. At each port, the edge leaving the cycle must
extend to a path of length $7$ ending at another vertex with three
distinct neighbours, with exactly two distinct neighbours at each
interior vertex. All vertices on the cycle and these additional
paths must have degree $d$, and the edge multiplicities must be
those prescribed above. These are bounded-radius conditions.
Retained cycles are disjoint: at each port the two cycle paths
are uniquely distinguished from the length-$7$ path by their lengths.
Their $1^+$ ports form a set $V_n$ of anchors. There is one anchor
orbit in $Y_Q$, so
\[
 \frac{|V_n|}{|V(Z_n)|}\longrightarrow
 \frac{1}{|V(Q\backslash Y_Q)|}>0.
\]
The length-$7$ paths between matching $i^+$ and $i^-$ ports of
retained cycles define partial bijections of $V_n$.
For each fixed $k$, a sufficiently large correct neighbourhood
at an anchor decodes to the labelled radius-$k$ Cayley ball.
Local convergence and the positive anchor density imply that
all but $o(|V_n|)$ anchors have this property.
Lemma~\ref{lem:completion} completes the partial bijections to
permutations and proves that $Q$ is sofic.
\end{proof}

\begin{remark}
The use of multigraphs is convenient and harmless. A connected
$d$-regular multigraph without loops has universal covering tree
$T_d$, and parallel edges are retained in both local convergence
and the covering construction. The graph above has no loops.
\end{remark}

\begingroup
\begin{proposition}\label{prop:biregular-encoding}
For every $a,b\geq3$ and every finitely generated group $Q$ with a generating tuple of size at least two, there is a connected simple bipartite $(a,b)$-biregular graph $Y_Q$, with a free type-preserving $Q$-action and finite quotient, such that
\[
 \nu_{Y_Q}\text{ graph-sofic}\quad\Longrightarrow\quad Q\text{ sofic}.
\]
The map $Y_Q\to Q\backslash Y_Q$ is a regular covering with deck group $Q$.
\end{proposition}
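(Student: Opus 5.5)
We adapt the gadget construction of Proposition~\ref{prop:encoding}, now working with simple graphs and two prescribed degrees. Colour the vertices black (degree $a$) and white (degree $b$). Every path we use will be bipartite, so its colours alternate, and degree padding must be done with pendant trees instead of edge multiplicities. For each $q\in Q$ we again take a cycle of even length with ports $1^+,1^-,\ldots,r^+,r^-$ in cyclic order. Port $i^+$ at $q$ is joined to port $i^-$ at $qs_i$ by a path of odd length $\ell_c$. Along the cycle, the ports are separated by paths of odd lengths $\ell_1,1,\ldots,1,\ell_2$, with $\ell_1,\ell_2,\ell_c$ pairwise distinct and all at least $3$, exactly as before. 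Plus and minus ports receive opposite colours, which is consistent with odd path lengths. Each vertex on these skeleton paths has skeleton degree $2$ or $3$, which is less than $\min(a,b)$, so it has a deficit $a-2$, $b-2$, $a-3$ or $b-3$. We fill the deficit by attaching, $Q$-equivariantly, disjoint finite ``decorations'': for each vertex, one fixed finite bipartite graph per missing edge. This keeps the graph simple and preserves the covering structure.

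\textbf{Building the decorations.} We need a finite rooted bipartite graph in which the root lacks exactly one edge and every other vertex has the degree of its colour. The plan is to take a finite connected $(a,b)$-biregular simple bipartite graph $D$, for instance a suitable cover or a complete bipartite graph $K_{b,a}$. We delete one edge $xy$ of $D$, with $x$ black and $y$ white. We then take two copies of the result and join $x$ in the first copy to $y$ in the second. The vertex $y$ of the first copy is left with deficit one and serves as the root, and it is white. Swapping the roles of $x$ and $y$ gives a black root. Attaching this gadget to a skeleton vertex of the opposite colour raises its degree by one, and the skeleton colouring is respected. Connectedness and bipartiteness follow, and $Q$ acts freely and type-preservingly by left translation. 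The quotient is finite and the quotient map is a regular covering, exactly as before.

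\textbf{Local decoding.} This is the main obstacle. In Proposition~\ref{prop:encoding} the skeleton was detected from ``number of distinct neighbours''; now every vertex has full degree, so we must instead recognise the skeleton as the complement of the decorations. The approach is to make the decorations identifiable within bounded radius. Each decoration is a finite subgraph attached to the rest of the graph by a single bridge edge. The skeleton, by contrast, is $2$-edge-connected: each cycle is joined to the rest through at least two connecting paths because $r\geq2$, and connecting paths lie on cycles in the Cayley-type structure. This is the reason for the hypothesis on the size of the generating tuple.

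We do not rely on global bridges, which are invisible locally. Instead, within a radius $R$ exceeding the diameter of the decoration, a bridge edge is characterised by the following property: deleting it separates off a component of exactly the decoration's isomorphism type. Conversely, a skeleton edge, for $R$ large compared with $\ell_1,\ell_2,\ell_c$ and $r$, lies on a cycle of bounded length inside its $R$-ball. Indeed, two consecutive cycles are joined by connecting paths that close up through a short loop of the ball only when relators are short, so we should instead use the $2r$-gon cycles themselves. Every cycle edge lies on the gadget cycle of bounded length. Each connecting path has length $\ell_c$, while decoration edges are all bridges; hence connecting paths are recognised as skeleton paths between two ports of bounded cycles. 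Having recovered the skeleton, ports are the skeleton vertices of skeleton degree $3$. The lengths $\ell_1,\ell_2,\ell_c$ recover the labels and orientations, and retained cycles are disjoint as before.

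\textbf{Concluding.} Given finite graphs $Z_n$ converging locally to $\nu_{Y_Q}$, we define retained cycles by these bounded-radius conditions and take their $1^+$ ports as anchors. The anchor density converges to $1/|V(Q\backslash Y_Q)|>0$. The connecting paths give partial bijections among the anchors, and all but $o(|V_n|)$ anchors decode the Cayley ball of radius $k$. Lemma~\ref{lem:completion} then shows that $Q$ is sofic.
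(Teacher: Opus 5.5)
The gap is in the decorations, and it cannot be repaired within the pendant-bridge scheme. Your gadget does not have a single deficient vertex. After you join $x$ in the first copy to $y$ in the second, the vertex $y$ of the first copy \emph{and} the vertex $x$ of the second copy each still miss one edge, one white and one black.

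This is forced by degree counting. Suppose an edge of a bipartite $(a,b)$-biregular graph cuts off a finite piece $C$ with $B$ black and $W$ white vertices. Counting the edges of $C$ from the two sides gives $aB=bW-1$ or $aB-1=bW$, so $\gcd(a,b)=1$. Thus for $a=b$, and more generally whenever $\gcd(a,b)>1$, no graph of the kind you want contains a finite piece hanging on a bridge. The case $a=b$ covers the regular trees, and e.g.\ the Bruhat--Tits tree of $\SL_2(\Q_p)$, where $a=b=p+1$. In these cases there is nothing for your bridge test to detect, and the decoding collapses. At best the proposal handles coprime $(a,b)$, and only after a correct one-root gadget is exhibited.

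A repair must attach each added piece through edges at both colours. For instance, attach your two-copy gadget by its white deficient vertex to a black skeleton vertex, and by its black one to a white skeleton vertex. You must then also balance the black and white deficits of the skeleton, which differ when $a\ne b$. You also need to replace the bridge test by some other bounded-radius recognition rule.

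The paper does this uniformly. Each skeleton vertex of degree $j$ becomes a copy of $K_{b,a}$ minus a matching of size $j$. Neighbouring blocks are joined by the crossed pair $u_{v,e}w_{v',e}$, $u_{v',e}w_{v,e}$, which uses one edge at each colour. The blocks are recovered as the components of the edges lying on cycles of length at most $6$, using that the skeleton is simple with girth at least $8$.

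A smaller point: your claim that the skeleton is $2$-edge-connected is false in general. If $Q$ is free on $S$, every connecting-path edge is a bridge. What holds, and suffices, is that for infinite $Q$ no skeleton edge cuts off a finite piece.
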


\begin{proof}
Let $S_Q$ be the graph constructed in Proposition~\ref{prop:encoding}
before edge multiplicities are assigned. Its ports have degree $3$
and its other vertices degree $2$. The distinguished cycles have
length $2r+6\geq10$, and every other cycle contains a length-$7$
connecting path and at least one further edge. Thus $S_Q$ is simple,
bipartite and has girth at least $8$. Its $Q$-action is free with
finitely many vertex orbits. The same port-and-path decoding as in
Proposition~\ref{prop:encoding}, omitting only the edge-multiplicity
tests, shows that graph-soficity of $\nu_{S_Q}$ implies soficity of $Q$.

\textbf{The blocks.}
For each vertex $v$ of $S_Q$ of degree $j\in\{2,3\}$, take a copy $C_v$ of $K_{b,a}$: its type-$0$ part has $b$ vertices and its type-$1$ part has $a$ vertices. Delete a matching of size $j$, and assign its deleted edges bijectively to the edges of $S_Q$ incident to $v$. Denote the two endpoints assigned to an incident edge $e$ by $u_{v,e}$ of type $0$ and $w_{v,e}$ of type $1$. For every edge $e=vv'$ of $S_Q$, add the two edges
\[
 u_{v,e}w_{v',e},\qquad u_{v',e}w_{v,e}.
\]
Make these choices $Q$-equivariantly, by first making them on orbit representatives. This is possible because the action on $S_Q$ is free on vertices and has no edge inversions.

Each deficient vertex receives exactly one new edge. Thus $Y_Q$ is $(a,b)$-biregular. It is connected and simple. The $Q$-action is free and type-preserving, with finitely many orbits. Its quotient map is therefore a regular covering.

\textbf{Recognizing the blocks.}
We claim that an edge of $Y_Q$ is internal to a block precisely when it lies on a simple cycle of length at most $6$.

First, every edge of $K_{b,a}$ minus a matching of size at most $3$ lies on a cycle of length $4$ or $6$. Indeed, for an edge $uw$, choose neighbours $u'\ne u$ of $w$ and $w'\ne w$ of $u$. If some such $u',w'$ are adjacent there is a $4$-cycle. Otherwise the matching condition forces both choice sets to be singletons. Then $a=b=3$, all three matching edges have been deleted, and the block is a $6$-cycle. These blocks are connected: two vertices on the type-$0$ side have at least $a-2\geq1$ common neighbours, and every type-$1$ vertex has at least $b-1\geq2$ neighbours.

Next, a cycle of length at most $6$ cannot use edges between blocks. If it did, the edges of $S_Q$ traversed by its projection would span a forest, since $S_Q$ has girth at least $8$. Choose a leaf of the nontrivial projected tree. The cycle enters and leaves that leaf block through the two edges associated with one deleted matching edge. Its path inside the leaf block has length at least $3$. In the rest of the cycle, the two endpoints in the neighbouring block are also the endpoints of a deleted matching edge. They are nonadjacent and have opposite types, so the complementary path between them has length at least $3$. Together with the two edges between blocks, this gives length at least $8$, a contradiction.

Consequently the connected components of the subgraph consisting of edges on cycles of length at most $6$ are exactly the blocks $C_v$. Contract each block and replace each resulting pair of parallel edges by one edge. This recovers $S_Q$.

\textbf{Finite approximations.}
This recovery is local: every block has exactly $a+b$ vertices and bounded diameter. In a finite approximation, keep only components of short-cycle edges that are copies of the prescribed blocks and whose vertices have the correct ambient degrees $a$ and $b$. Connect two retained blocks only when their external edges form the prescribed pair. Discard malformed incidences. The resulting finite graph has maximum degree $3$. Recognition of a complete block, including the absence of further short-cycle edges leaving it, requires only a bounded neighbourhood. Thus all but a vanishing proportion of vertices belong to correctly recognized blocks, and for every fixed radius almost every retained block has the correct decoded neighbourhood.

Every block has the same size $a+b$. Choosing a retained block uniformly therefore has the same asymptotic weighting as choosing an original vertex uniformly and taking its block. The decoded finite graphs converge to $\nu_{S_Q}$. The decoding of $S_Q$ described above now proves the implication.
\end{proof}

\endgroup

\section{Tree lattices and finite graph approximations}\label{sec:trees}

\begingroup
Let $a,b\geq3$, with vertices of types $0$ and $1$ having degrees $a$ and $b$, respectively. Write $A=\Aut(T_{a,b})$ and let $A^+$ be its type-preserving subgroup. The group $A$ is unimodular: its type-preserving subgroup is generated by compact vertex stabilizers and has index at most two. Fix adjacent vertices $o_0,o_1$ of the two types. For an IRS $\mu$ supported on type-preserving subgroups, define its quotient law $\mathcal L(\mu)$ by sampling $H\sim\mu$ and rooting $H\backslash T_{a,b}$ at the image of $o_t$, where
\begin{equation}\label{eq:tree-root}
 \Pr(t=0)=\frac{b}{a+b},\qquad
 \Pr(t=1)=\frac{a}{a+b}.
\end{equation}
We forget the type labels in the resulting rooted graph. When $a=b$, $A$ is vertex-transitive and this is also the law obtained from either fixed root.

\begin{lemma}\label{lem:tree-extraction}
If $\mu$ is a co-sofic IRS of $A$ supported on type-preserving subgroups acting freely on vertices, then $\mathcal L(\mu)$ is graph-sofic.
\end{lemma}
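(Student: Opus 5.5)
Since $\mu$ is co-sofic, it lies in the closed convex hull of lattice IRSs of $A$. I would first reduce to lattices contained in $A^+$. The group $A^+$ is open of index at most two, so it has finite covolume and is cocompact in $A$. For a lattice $\Lambda<A$, the intersection $\Lambda\cap A^+$ is a lattice of index at most two in $\Lambda$. Since $\mu$ is supported on $\Sub(A^+)$, which is a clopen subset of $\Sub(A)$, I would restrict approximating measures to this clopen set and renormalize. Conjugates of $\Lambda$ that lie in $A^+$ are type-preserving lattices, so this should give approximation of $\mu$ by convex combinations of IRSs supported on type-preserving lattices. The total mass on $\Sub(A^+)$ tends to $1$.

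The key observation is that $\mathcal L$ is affine and continuous in $\mu$ for the local weak topology on rooted graphs. For fixed $k$, the radius-$k$ ball of $H\backslash T$ at the image of $o_t$ is determined by $H$ near a compact set. It should therefore be a locally constant function of $H$ in the Chabauty topology, at least at groups acting freely near $o_t$. For general $H$ there may be discontinuities coming from stabilizers. To handle this, I would only use the events "the ball is a given finite graph $\beta$ and $H$ acts freely on $B_{k+1}(o_t)$". These events are clopen, and their $\mu$-measures determine the law of radius-$k$ balls under $\mathcal L(\mu)$. Consequently, for $\nu_n\to\mu$ with $\nu_n$ a convex combination of type-preserving lattice IRSs, the $\mathcal L(\nu_n)$-probability that the root has a free neighbourhood tends to $1$, and the ball laws converge.

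The main obstacle is realising $\mathcal L(\mu_\Lambda)$ for a lattice $\Lambda$, or a convex combination of lattices, as a finite graph, since lattices may be nonuniform and have nontrivial vertex stabilizers. The quotient $\Lambda\backslash T$ is then infinite, and sampling via Haar measure weights each vertex $x$ by $1/|\Lambda_x|$ times a type factor. I would discard all vertices with nontrivial stabilizer in $\Lambda$ and also vertices within distance $k+1$ of those. Any vertex $x$ whose $\Lambda$-stabilizers on $B_{k+1}(x)$ are trivial has its neighbourhood embedded isomorphically in the quotient, so its ball is correct. The remaining vertices carry weight exactly $1$, up to the type factor $b/(a+b)$ or $a/(a+b)$. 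These factors match the edge-counting normalization: in a finite $(a,b)$-biregular graph, the proportion of type-$0$ vertices is $b/(a+b)$. Hence the uniform measure on vertices of a finite graph corresponds exactly to the rule in~\eqref{eq:tree-root}.

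The weight-one part of a single lattice quotient is still possibly infinite, but finite covolume gives finite total weight. I would therefore take a large finite subgraph containing all but $\varepsilon$ of the weight, together with a bounded neighbourhood. Deleting the rest and the bad vertices changes only an $o(1)$ fraction of balls. Convex combinations with rational weights become disjoint unions of copies, and irrational weights are approximated. Degrees of the resulting finite graphs are bounded by $\max(a,b)$. Letting $k\to\infty$ and $\varepsilon\to0$ along a diagonal sequence yields finite graphs $Y_n$ whose radius-$k$ ball laws converge to those of $\mathcal L(\mu)$, proving graph-soficity.
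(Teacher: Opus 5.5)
\textbf{There is a genuine gap in how you handle torsion in the approximating lattices.} The event ``$H$ acts freely on $B_{k+1}(o_t)$'' is not clopen. For a vertex stabilizer $K$ it is $\{H:H\cap K=\{1\}\}$, which is closed but not open, because $K\setminus\{1\}$ is not compact: cyclic groups generated by finite-order elements fixing ever larger balls converge to $\{1\}$. So $\nu_n\to\mu$ gives no lower bound on the probability of a free neighbourhood, and that probability can tend to $0$.

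For instance, take a finite bipartite biregular graph consisting of
\begin{itemize}
\item a huge large-girth piece $D_n$, and
\item two small isomorphic gadgets attached symmetrically,
\end{itemize}
with an involution $\sigma$ fixing $D_n$ pointwise and swapping the gadgets. The lifts of $\langle\sigma\rangle$ to $T_{a,b}$ form a uniform type-preserving lattice $\Lambda_n$. Its vertex stabilizers over $D_n$ have order two, but at a typical root the nontrivial stabilizing element fixes a ball whose radius tends to infinity. Thus $\mu_{\Lambda_n}\to\delta_{\{1\}}$, yet almost all root weight lies on vertices with nontrivial stabilizer, and your deletion step throws away almost everything.

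The missing idea is to test a clopen condition instead. Call a vertex good if its $H$-stabilizer fixes all its neighbours, i.e.\ $H$ misses the compact open set $K\setminus K_1$. Then:
\begin{itemize}
\item badness has probability tending to zero;
\item adjacent good vertices have equal stabilizers, so each connected good component of $\Lambda\backslash T_{a,b}$ has constant weight $|\Lambda_{\widetilde v}|^{-1}$;
\item each such component is finite by finite covolume, and the root is uniform on it, whether or not the stabilizers are trivial.
\end{itemize}
Incidentally, continuity of $\mathcal L$ needs no freeness at all. The radius-$k$ quotient ball is decided by whether $H$ meets finitely many compact open transporter sets, so it is locally constant in $H$.

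\textbf{Your reduction to $A^+$ is also wrong as written.} When $a=b$, $\Sub(A^+)$ is closed but not open in $\Sub(A)$: groups generated by an inversion of a far-away edge converge to $\{1\}$. The mass on $\Sub(A^+)$ need not tend to $1$. Free lattices coming from non-bipartite $d$-regular graphs of girth tending to infinity satisfy $\mu_{\Lambda_n}\to\delta_{\{1\}}$, yet give $\Sub(A^+)$ mass zero, because $A^+$ is normal. Use instead the map $H\mapsto H\cap A^+$. It is continuous since $A^+$ is open and closed, it fixes $\mu$, and it sends $\mu_\Lambda$ to $\mu_{\Lambda\cap A^+}$. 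This is exactly the fact about $\Lambda\cap A^+$ that you state but never use.
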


\begin{proof}
Approximate $\mu$ by finite convex combinations of lattice IRSs. When $a=b$, replace each approximating subgroup by its intersection with $A^+$. Intersection with an open subgroup is continuous on Chabauty spaces, and this operation sends a lattice IRS of $A$ to the IRS of the finite-index lattice $\Lambda\cap A^+$. It fixes $\mu$. We can therefore assume all approximating lattices preserve types.

\textbf{Root weights.}
For a lattice $\Lambda<A$, put $X=\Lambda\backslash T_{a,b}$ and
\[
 w(v)=|\Lambda_{\widetilde v}|^{-1}\quad(v\in V(X)).
\]
Here $\widetilde v$ is any lift. The covolume formula gives $\sum_vw(v)<\infty$; see \cite[Chapter~1]{BL01}. If $a\ne b$, normalize the Haar measure of an edge stabilizer to be one. The two vertex stabilizers then have measures $a$ and $b$. Writing $V=\operatorname{vol}(A/\Lambda)$, the fixed-type root probabilities are
\[
 \Pr(v\mid t=0)=\frac{a}{V}w(v),\qquad
 \Pr(v\mid t=1)=\frac{b}{V}w(v).
\]
After the mixing in \eqref{eq:tree-root}, every vertex, of either type, has probability $ab\,w(v)/((a+b)V)$. If $a=b$, vertex-transitivity of the full group $A$ gives the same conclusion directly: the root probability is proportional to $w(v)$ over all vertices. Thus in both cases the untyped root law is the normalized weight $w$.

\textbf{Deleting the bad vertices.}
Call a vertex of $T_{a,b}$ good for $H$ if its $H$-stabilizer fixes all its neighbours. For either fixed type, badness is the Chabauty condition that $H$ meets $K\setminus K_1$, where $K$ is the vertex stabilizer in $A$ and $K_1$ fixes its radius-one ball pointwise. This is a compact open set, so the condition is clopen. The probability of badness tends to zero in the approximating IRSs. Invariance gives the same probability at every vertex of a given type. Since balls have uniformly bounded size for each fixed radius, the probability of any bad vertex in such a ball tends to zero.

Goodness descends to $X$. Adjacent good lifts have the same stabilizer: the stabilizer of either fixes the other, giving the two inclusions. Thus each connected component $D$ of the induced graph on good vertices has a constant positive weight $w$. Since the total weight is finite, $D$ is finite. Conditional on the root lying in $D$, it is uniform on $V(D)$.

Now choose $H$ with an approximating IRS law and form
$X=H\backslash T_{a,b}$, rooted at a vertex $v$ according to
\eqref{eq:tree-root}. A vertex of $X$ is good if its lifts are good;
this is well defined because goodness is $H$-invariant. For all
sufficiently large indices, condition on $v$ being good, that is,
restrict to outcomes with a good root and renormalize the probability
distribution. Delete all bad vertices of $X$ and their incident edges,
and keep the connected component $D$ containing $v$, with the same
root. The component is taken in the quotient graph $X$. By the
preceding paragraph it is finite, and in each fixed lattice quotient,
conditioning on the selected component makes the inherited root
uniform on that component. We obtain a countable convex combination
of uniformly rooted finite graphs of maximum degree at most
$\max\{a,b\}$. The conditioning changes asymptotically negligible
mass. Every quotient path lifts to the tree, so the rooted radius-$R$
neighbourhood is unaffected when the radius-$(R+1)$ tree ball consists
of good vertices.

Finally, the original quotient laws converge locally to $\mathcal L(\mu)$. A bounded quotient ball is determined by finitely many vertex and edge identifications in a bounded tree ball; these are tests for meeting compact open transporter sets. Each is a clopen Chabauty test. The retained finite-component laws therefore have the same limit. Truncating their countable convex combinations and approximating the coefficients by vertex proportions in disjoint unions produces a sequence of finite graphs with that limit.
\end{proof}

\begin{proposition}\label{prop:tree}
For every $a,b\geq3$ and every finitely generated non-sofic group $Q$,
there are a free uniform lattice $\Gamma<\Aut(T_{a,b})$ and
$N\triangleleft\Gamma$ with $\Gamma/N\cong Q$ such that
$\Ind_\Gamma^{\Aut(T_{a,b})}\delta_N$ is non-co-sofic.
\end{proposition}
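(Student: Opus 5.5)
Take $Y_Q$ from Proposition~\ref{prop:biregular-encoding}: a connected simple bipartite $(a,b)$-biregular graph carrying a free, type-preserving $Q$-action with finite quotient $B=Q\backslash Y_Q$. Here we use a generating tuple of size at least two, repeating a generator if necessary.

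\textbf{Construction of $\Gamma$ and $N$.} The universal cover of $B$ is $T_{a,b}$, since $B$ is a finite $(a,b)$-biregular graph. Let $\Gamma=\pi_1(B)$ act on $T_{a,b}$ by deck transformations. This action is free and type-preserving, with finite quotient $B$, so $\Gamma<A^+<A$ is a free uniform lattice. The covering $Y_Q\to B$ is regular with deck group $Q$, so it corresponds to a normal subgroup $N\triangleleft\Gamma$ with $N\backslash T_{a,b}\cong Y_Q$ and $\Gamma/N\cong Q$. Since $Y_Q$ is connected, $N=\pi_1(Y_Q)$ and $N\backslash T_{a,b}=Y_Q$.

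\textbf{Reduction.} Suppose for contradiction that $\mu=\Ind_\Gamma^{A}\delta_N$ is co-sofic. Every conjugate $gNg^{-1}$ acts freely on vertices. Conjugates by $g\notin A^+$ (possible only when $a=b$) are still type-preserving, because $N\subset A^+$ is normal in it. So $\mu$ satisfies the hypotheses of Lemma~\ref{lem:tree-extraction}, and $\mathcal L(\mu)$ is graph-sofic.

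\textbf{Identifying $\mathcal L(\mu)$ with $\nu_{Y_Q}$.} This is the step needing care. A sample of $\mu$ is $H=gNg^{-1}$ with $g\Gamma$ uniform in $A/\Gamma$. Then $H\backslash T_{a,b}\cong Y_Q$ via $x\mapsto g^{-1}x$, and the root becomes the image of $g^{-1}o_t$ in $Y_Q$.

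When $a\ne b$ (so $A=A^+$), $g^{-1}o_t$ is a random vertex of type $t$. Its image in $B$ has law proportional to $w(v)=1$ on type-$t$ vertices of $B$, by the covolume computation in Lemma~\ref{lem:tree-extraction}, since $\Gamma$ acts freely. After mixing by \eqref{eq:tree-root}, the root is therefore uniform over $V(B)$, as that lemma shows.

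The image in $Y_Q$ is defined only up to the $Q$-action, that is, up to rooted isomorphism. Hence the rooted-isomorphism class of the sample is that of $(Y_Q,y)$ with the orbit of $y$ uniform among $Q$-orbits. This is exactly $\nu_{Y_Q}$. The case $a=b$ is the same, using vertex-transitivity.

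So $\nu_{Y_Q}$ is graph-sofic. By Proposition~\ref{prop:biregular-encoding}, $Q$ is then sofic, which contradicts the hypothesis. Hence $\Ind_\Gamma^{A}\delta_N$ is non-co-sofic.
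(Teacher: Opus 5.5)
Your proposal is correct and follows the paper's proof. You realize $\Gamma=\pi_1(B)$ and $N$ from the tower $T_{a,b}\to Y_Q\to B$, identify the quotient law of the induced IRS with $\nu_{Y_Q}$ via freeness of $\Gamma$ and the covolume computation of Lemma~\ref{lem:tree-extraction}, and conclude by Lemma~\ref{lem:tree-extraction} and Proposition~\ref{prop:biregular-encoding}; you spell out the hypothesis check and the root-law identification in more detail than the paper. Repeating a generator is harmless but unnecessary, since a non-sofic group cannot be cyclic.
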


\begin{proof}
Apply Proposition~\ref{prop:biregular-encoding} to $Q$, and put
$B=Q\backslash Y_Q$. The tower of coverings
\[
 T_{a,b}\longrightarrow Y_Q\longrightarrow B
\]
gives a free uniform lattice $\Gamma\cong\pi_1(B)<A^+$ and
$N\triangleleft\Gamma$ with quotient $Q$ and
$N\backslash T_{a,b}\cong Y_Q$. Every $\Gamma$-vertex stabilizer is
trivial. The covolume calculation in Lemma~\ref{lem:tree-extraction},
with the root choice in \eqref{eq:tree-root}, therefore gives equal
probability to the vertices of $B$. Hence the quotient law of the
induced IRS is $\nu_{Y_Q}$. If this IRS were co-sofic,
Lemma~\ref{lem:tree-extraction} and
Proposition~\ref{prop:biregular-encoding} would make $Q$ sofic.
\end{proof}

\endgroup

\begingroup
\section{Rank-one groups over non-Archimedean local fields}\label{sec:padic}

We first work with a unimodular locally compact group $G$ acting
continuously on $T_{a,b}$, with compact vertex stabilizers, such that
its type-preserving subgroup $G^+$ is transitive on edges.
The action is allowed to have compact kernel.

\begin{lemma}\label{lem:realization}
Under the hypotheses above, every finite connected bipartite $(a,b)$-biregular multigraph $B$ is $\Gamma\backslash T_{a,b}$ for a free uniform lattice $\Gamma<G^+$ acting freely on vertices.
\end{lemma}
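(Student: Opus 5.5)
The plan is to build $\Gamma$ as the fundamental group of a graph of groups with trivial vertex and edge groups, realized inside $G^+$ by Bass--Serre covering theory. Since $T_{a,b}$ is the universal cover of $B$, fix a covering map $p:T_{a,b}\to B$ preserving types; such a map exists because $B$ is connected, bipartite and $(a,b)$-biregular, with parallel edges counted by multiplicity. Choose a spanning tree $\mathcal T\subset B$ and lift it isomorphically to a subtree $\widetilde{\mathcal T}\subset T_{a,b}$. For each edge $e$ of $B$ not in $\mathcal T$, oriented from its type-$0$ endpoint $v$ to its type-$1$ endpoint $w$, we have two lifts. One is the lift $\widetilde e$ at $\widetilde v\in\widetilde{\mathcal T}$, with other endpoint $w'$. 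The other is the lift $\widetilde e'$ at some vertex $v'$, ending at $\widetilde w\in\widetilde{\mathcal T}$. The goal is an element $g_e\in G^+$ with $g_e\widetilde e'=\widetilde e$, chosen compatibly with the covering, meaning $p\circ g_e=p$.

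The key step is to produce the required deck transformations inside $G$. The deck group of $p$ is a free group $\pi_1(B)$ acting freely on $T_{a,b}$, but a priori it is only a subgroup of $\Aut(T_{a,b})$, not of the image of $G$. I would therefore not take the abstract deck group. Instead I would construct the elements $g_e$ directly, using local transitivity of $G^+$. Edge-transitivity of $G^+$ together with compactness of vertex stabilizers should give that, for each vertex $x$, the stabilizer $G_x$ acts transitively on the edges at $x$. This follows since $G^+$ is edge-transitive and preserves types, so $G_x$ is transitive on the star of $x$.

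From this I would prove an extension lemma: any type-preserving isomorphism between finite subtrees can be extended by an element of $G^+$, provided it is a local isomorphism compatible with $p$. The natural approach is induction on radius. At each step we only need to realize a prescribed bijection of the stars at a vertex, and mere transitivity does not give arbitrary permutations of the star. So I would instead construct $\Gamma$ via a \emph{lift of the labelling}, as follows. Use $G$ itself to define $p$: fix the edge $o_0o_1$ and identify the edges at $o_0$ and $o_1$ with $G_{o_0}$- and $G_{o_1}$-orbits. Then define $p$ by choosing, edge by edge, a $G$-compatible colouring, in the sense of Bass--Kulkarni local colourings (\cite{BL01}). Existence of a free uniform lattice with quotient $B$ in any closed locally transitive group is essentially the Bass--Kulkarni theorem, and I would follow its proof.

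Concretely, I would set $\Gamma=\langle g_e\rangle$, with each $g_e$ chosen in $G^+$ so that $g_e$ maps the lifted star structure at $v'$ to that at $\widetilde v$ compatibly with $p$. One then checks that $\widetilde{\mathcal T}$ together with the edges $\widetilde e$ forms a fundamental domain for $\Gamma$. The Bass--Serre normal form argument (ping-pong along the tree) shows that $\Gamma$ is free on the $g_e$ and acts freely on vertices, that $\Gamma\backslash T_{a,b}\cong B$, and hence that $\Gamma$ is discrete. Discreteness then gives cocompactness, since the quotient is finite, and vertex stabilizers in $G$ are compact; the covolume formula of Lemma~\ref{lem:tree-extraction} makes $\Gamma$ a uniform lattice. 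Because the action may have a compact kernel $K$, I would realize $\Gamma$ first in the image of $G$ in $\Aut(T_{a,b})$ and then lift generators arbitrarily. A free group lifts, and the lift meets $K$ trivially since it acts freely on vertices, so it is still discrete with finite covolume.

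The main obstacle is the compatibility of the chosen $g_e$ with the covering. That is, it is ensuring that $G$, which is only edge-transitive and need not contain all of $\Aut(T_{a,b})$, supplies elements respecting a single global covering map $p$. I would first try to avoid this by building $p$ from $G$ via a $G$-invariant edge colouring of $T_{a,b}$ induced from $B$. If that fails, the fallback is to note that the rank-one Bruhat--Tits action has stabilizers acting $2$-transitively on stars, which gives enough freedom at each step.
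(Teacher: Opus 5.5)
Your set-up is right: a type-preserving covering $p:T_{a,b}\to B$, a lifted spanning tree $\widetilde{\mathcal T}$, and for each omitted edge $e$ the two lifts $\widetilde e$ at $\widetilde v$ and $\widetilde e'$ at $\widetilde w$. The gap is the extra requirement $p\circ g_e=p$, which you make the key step and then leave unresolved. For a pre-chosen $p$ there is no reason for its deck group to lie in $G$; choosing $p$ so that it does is essentially the content of the lemma itself. The hypotheses give only edge-transitivity of $G^+$ and compact stabilizers, and none of your remedies supplies the compatibility:
\begin{enumerate}
\item A $G$-invariant edge colouring induced from $B$ cannot exist, because an edge-transitive $G^+$ preserves only constant edge colourings.
\item $2$-transitivity of stabilizers on stars is not a hypothesis of the lemma. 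Even where it holds, prescribing the images of two edges per step does not realize the arbitrary star bijections that a deck transformation of a fixed $p$ must induce.
\item ``Following the Bass--Kulkarni proof'' is a deferral, not an argument.
\end{enumerate}
As written, the proposal does not produce the generators.

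The missing observation is that the compatibility is unnecessary. The covering $p$ is needed only to allocate the outward edges at the vertices of $\widetilde{\mathcal T}$. The group you build defines its own covering $T_{a,b}\to\Gamma\backslash T_{a,b}\cong B$, in general different from $p$. Require only $g_e\widetilde e'=\widetilde e$, that is, $g_e\widetilde w=w'$ and $g_e v'=\widetilde v$. Since the endpoint types match, edge-transitivity of $G^+$ gives such $g_e\in G^+$ directly, so the detour through the image of $G$ and the compact kernel is also unnecessary.

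Then assemble the universal cover of $B$ from copies of $\mathcal T$ indexed by reduced words in the omitted edges, and send the copy indexed by $\gamma$ to $g_\gamma\widetilde{\mathcal T}$. The two conditions on $g_e$ make this a well-defined equivariant graph map. It is bijective on stars at vertices of $\widetilde{\mathcal T}$: there the star in $T_{a,b}$ consists of the tree edges together with the lifts $\widetilde e$ (at type-$0$ vertices) or $\widetilde e'=g_e^{-1}\widetilde e$ (at type-$1$ vertices), each exactly once. Equivariance propagates this to every vertex. A locally bijective map of trees is an isomorphism, so $\Gamma=\langle g_e\rangle$ is free on the $g_e$, acts freely on vertices, and satisfies $\Gamma\backslash T_{a,b}\cong B$. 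Your discreteness and cocompactness argument then applies. Your normal-form check never actually uses $p\circ g_e=p$, so deleting that condition turns your outline into the paper's proof.
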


\begin{proof}
Choose a spanning tree $D\subset B$, embed it in $T_{a,b}$ respecting types, and allocate the unused outward edges at each lifted vertex to the incident half-edges of $B\setminus D$. An embedding and these allocations exist because the degrees agree.

For an omitted edge from $v$ to $w$, write its two allocated outward edges as $(\widetilde v,x)$ and $(\widetilde w,y)$. Edge-transitivity of $G^+$ supplies $g_e$ with
\[
 g_e\widetilde w=x,\qquad g_ey=\widetilde v.
\]
The types match because $B$ is bipartite. Assigning $e\mapsto g_e$ defines a homomorphism from the free group $\pi_1(B)$ to $G^+$.

To verify faithfulness and the quotient, construct the universal covering tree of $B$ from copies of $D$ indexed by reduced words in the omitted edges. Map the copy with index $\gamma$ to $g_\gamma D$. The displayed equations make this a well-defined graph map, and the allocated half-edges make it locally bijective. A locally bijective map between connected trees is an isomorphism. It follows that the homomorphism is faithful, its image $\Gamma$ acts freely on vertices, and $\Gamma\backslash T_{a,b}=B$. This is the usual spanning-tree construction in Bass--Serre theory \cite{Serre80}.

A vertex stabilizer in $G$ is compact and open and meets $\Gamma$ trivially, so $\Gamma$ is discrete. Finiteness of $B$ and compactness of vertex stabilizers imply compactness of $G/\Gamma$. Since $G$ is unimodular, $\Gamma$ is a uniform lattice.
\end{proof}

\begin{theorem}\label{thm:tree-action}
Let $G$ satisfy the hypotheses above. For every finitely generated
non-sofic group $Q$, there are a free uniform lattice $\Gamma<G$
and $N\triangleleft\Gamma$ with $\Gamma/N\cong Q$ such that
$\mu_N^G:=\Ind_\Gamma^G\delta_N$ is non-co-sofic.
This IRS is discrete and ergodic and has compact conjugacy orbit.
\end{theorem}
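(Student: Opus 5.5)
Let $\pi:G\to A=\Aut(T_{a,b})$ denote the action homomorphism, with compact kernel $K_0$. I will build $\Gamma$ and $N$ inside $G^+$ using Lemma~\ref{lem:realization}, and then transfer the problem to $A$. There I will use Proposition~\ref{prop:biregular-encoding} and Lemma~\ref{lem:tree-extraction}, essentially repeating the proof of Proposition~\ref{prop:tree}.

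First, the construction. Proposition~\ref{prop:biregular-encoding} requires a generating tuple of size at least two; I add a trivial or repeated generator if needed. It then gives the graph $Y_Q$ and the finite connected bipartite $(a,b)$-biregular quotient $B=Q\backslash Y_Q$. Lemma~\ref{lem:realization} realizes $B=\Gamma\backslash T_{a,b}$ for a free uniform lattice $\Gamma<G^+$ acting freely on vertices. Thus $T_{a,b}\to B$ is the universal covering and $\Gamma\cong\pi_1(B)$. The regular covering $Y_Q\to B$ with deck group $Q$ corresponds to a normal subgroup $N\triangleleft\Gamma$ with $\Gamma/N\cong Q$ and $N\backslash T_{a,b}\cong Y_Q$.

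Since $\Gamma$ is cocompact, the remarks after Lemma~\ref{lem:induction} apply. The IRS $\mu_N^G$ is ergodic, its support is the compact conjugacy orbit (the image of $G/\Gamma$), and it is discrete because $N\subset\Gamma$.

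Second, the transfer to $A$. Because $\Gamma$ acts freely on vertices and vertex stabilizers in $G$ are compact open, $\Gamma\cap K_0=\{1\}$. Hence $\pi|_\Gamma$ is injective, and $\pi(\Gamma)$ is a free uniform lattice in $A^+$ with the same quotient $B$. Suppose $\mu_N^G$ were co-sofic. By Lemma~\ref{lem:induction}, I would pass to the closed image $\pi(G)<A$, which is cocompact because it is edge-transitive with compact stabilizers. The pushforward $\pi_*\mu_N^G$ is an IRS of $\pi(G)$, and it is co-sofic there. It equals $\Ind_{\pi(\Gamma)}^{\pi(G)}\delta_{\pi(N)}$, since pushforward commutes with averaging over $G/\Gamma\cong\pi(G)/\pi(\Gamma)\cdot$(compact fibre).

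Next I need $\pi(G)$ to carry a finite invariant measure on $A/\pi(G)$. It has finite covolume in $A$ because it contains the lattice $\pi(\Gamma)$ of $A$, and both groups are unimodular. Lemma~\ref{lem:induction} then shows that inducing further to $A$ preserves co-soficity. By transitivity of induction, the result is $\Ind_{\pi(\Gamma)}^{A}\delta_{\pi(N)}$. This IRS is supported on type-preserving subgroups acting freely on vertices.

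The argument of Proposition~\ref{prop:tree} now applies verbatim. All $\pi(\Gamma)$-vertex stabilizers are trivial, so the quotient law is $\nu_{Y_Q}$. Lemma~\ref{lem:tree-extraction} then makes $\nu_{Y_Q}$ graph-sofic, and Proposition~\ref{prop:biregular-encoding} makes $Q$ sofic, a contradiction.

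The main obstacle is the bookkeeping in the transfer step. Three points need checking:
\begin{enumerate}
\item $\pi(G)$ is closed in $A$. This holds because $\pi$ is proper, having compact kernel, and $G$ acts continuously with compact open stabilizers.
\item $\pi(G)$ is cocompact in $A$ with an invariant probability on $A/\pi(G)$, so that Lemma~\ref{lem:induction} applies.
\item The pushforward of the induced IRS is the induced IRS of the image.
\end{enumerate}
If the cocompactness in point (ii) is awkward, I would instead prove the claim directly: any lattice approximating in $G$ pushes to a lattice in $A$. Then I would rerun the extraction argument of Lemma~\ref{lem:tree-extraction} inside $G$ itself, since it only uses vertex stabilizers, clopen Chabauty tests, and the covolume formula, all of which are available for $G$ acting on $T_{a,b}$.
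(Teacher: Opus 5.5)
Your proposal is correct and follows essentially the same route as the paper: realize $B=Q\backslash Y_Q$ by Lemma~\ref{lem:realization}, push $\mu_N^G$ forward along the proper action homomorphism $\pi$ onto its closed, cocompact image in $A$, induce up to $A$ via Lemma~\ref{lem:induction} and transitivity, identify the quotient law with $\nu_{Y_Q}$, and conclude with Lemma~\ref{lem:tree-extraction} and Proposition~\ref{prop:biregular-encoding}. One point of wording in your item (i): properness of $\pi$ comes from compactness of the vertex stabilizers $G_v=\pi^{-1}(A_v)$, not from compactness of the kernel alone. Also, your worry about needing a generating tuple of size two is vacuous, since cyclic groups are sofic.
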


\begin{proof}
Apply Proposition~\ref{prop:biregular-encoding} to the prescribed $Q$, and put $B=Q\backslash Y_Q$. Realize $B$ by Lemma~\ref{lem:realization}. The regular covering $Y_Q\to B$ determines $N\triangleleft\Gamma$ such that
\[
 \Gamma/N\cong Q,\qquad N\backslash T_{a,b}\cong Y_Q.
\]

Let $\pi:G\to H\leq A$ be the action homomorphism. Compact vertex stabilizers imply that $\pi$ is proper, with compact kernel and closed image. The restriction to $\Gamma$ is injective, by vertex-freeness. Write $\overline\Gamma=\pi(\Gamma)$ and $\overline N=\pi(N)$. The group $H$ is unimodular. It is cocompact in $A$: edge-transitivity gives $A=H A_{\{e\}}$ for a fixed edge $e$, whose setwise stabilizer is compact. Thus $A/H$ has an invariant probability measure.

If $\mu_N^G$ were co-sofic, Lemma~\ref{lem:induction}, first applied to $\pi$ and then to induction from $H$ to $A$, would show that
\[
 \mu_{\overline N}^A=\Ind_{\overline\Gamma}^A\delta_{\overline N}
\]
is co-sofic. Its subgroups are type-preserving and act freely on vertices. Moreover
\[
 \mathcal L(\mu_{\overline N}^A)=\nu_{Y_Q}.
\]
Indeed, all $\overline\Gamma$-vertex stabilizers are trivial. The covolume calculation in Lemma~\ref{lem:tree-extraction}, with \eqref{eq:tree-root}, therefore gives equal probability to the vertices of $B$, which correspond to the $Q$-orbits in $Y_Q$.

Lemma~\ref{lem:tree-extraction} would make $\nu_{Y_Q}$ graph-sofic, and Proposition~\ref{prop:biregular-encoding} would make $Q$ sofic, a contradiction. Finally, $N$ is discrete, and the map $G/\Gamma\to\Sub(G)$ is continuous and equivariant. Its image is a compact conjugacy orbit, and the push-forward of the transitive invariant probability measure is ergodic.
\end{proof}

\subsection{Proof of Theorem~\ref{thm:trees-padic}}
\begin{proof}
For $\Aut(T_{a,b})$, the assertion is Proposition~\ref{prop:tree}.
Now let $k$ be a non-Archimedean local field and $\mathbf G$ a
connected simple $k$-group of $k$-rank one.
Bruhat--Tits theory gives an action of $\mathbf G(k)$ on a locally
finite thick tree $T_{a,b}$, with $a,b\geq3$. Vertex stabilizers are
compact and open, the type-preserving subgroup is edge-transitive,
and $\mathbf G(k)$ is unimodular. These facts hold in both
characteristics; see \cite[\S\S1.1, 1.3]{Baum03} and
\cite[\S3.2.3]{RTW15}. Apply Theorem~\ref{thm:tree-action}.
\end{proof}

The proof uses neither an analytic inverse function theorem nor an
exclusion of small torsion elements. Approximating lattices may have
nontrivial finite vertex stabilizers; Lemma~\ref{lem:tree-extraction}
removes their effect locally. Compact kernels include finite centers
and compact anisotropic factors. The side-pairing generators still
generate a free lattice in the original group, because their action
on the covering tree is faithful. No surjectivity assertion for
central isogenies on $k$-points is needed.

\endgroup

\section{Consequences and scope}

\begin{corollary}
For every $n\geq2$, there is a regular covering $X$ of a
closed oriented hyperbolic $n$-manifold such that the law of
$X$ {with a basepoint chosen uniformly} in a fundamental domain of its deck
group is not a Benjamini--Schramm limit of finite-volume
oriented hyperbolic orbifolds. The injectivity radius of
$X$ has a positive uniform lower bound.
{For every $a,b\geq3$, there is an analogous regular covering
of a finite bipartite $(a,b)$-biregular graph whose uniform-orbit
rooted law is not a local weak limit of finite graphs.}
\end{corollary}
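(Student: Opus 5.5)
The plan is to deduce both parts of the corollary from the constructions already in the paper, translating the statements about invariant random subgroups into the geometric language of Benjamini--Schramm convergence.

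\textbf{Hyperbolic manifolds.} Fix $n\geq2$ and put $G=\operatorname{SO}^{+}(n,1)$, which is the orientation-preserving isometry group of $\HH^n$. By Remark~\ref{rem:prescribed-quotient}, and after passing to a finite-index torsion-free subgroup, there is a torsion-free cocompact lattice $\Gamma<G$ together with $N\triangleleft\Gamma$ such that $\Gamma/N$ is a finitely generated non-sofic group. Also, $\Ind_\Gamma^G\delta_N$ is non-co-sofic. We take $M=\Gamma\backslash\HH^n$, which is closed and oriented, and $X=N\backslash\HH^n$. Then $X$ is a regular cover of $M$ with deck group $Q=\Gamma/N$.

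The injectivity radius of $X$ at a point is at least the injectivity radius of $M$ at its image, because $N\subset\Gamma$. Compactness of $M$ and torsion-freeness of $\Gamma$ therefore give a uniform positive lower bound.

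Next we identify the rooted law. Choose the basepoint uniformly in a fundamental domain for $Q$ acting on $X$; this is the same as choosing a uniform point of $M$ and lifting it. The resulting law is the image of $\Ind_\Gamma^G\delta_N$ under the standard correspondence $H\mapsto (H\backslash\HH^n,[o])$, composed with a random rotation in $K=\operatorname{SO}(n)$. The standard dictionary (\cite{ABBGNRS17}) says that discrete IRSs of $G$ correspond to unimodular random framed or rooted orbifolds. Under this dictionary, a finite-volume oriented orbifold $\Lambda\backslash\HH^n$ with a uniform basepoint corresponds to $\mu_\Lambda$, and Benjamini--Schramm convergence corresponds to weak convergence of IRSs.

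So suppose the law of $X$ were a Benjamini--Schramm limit of such orbifolds. Then $\mu_{\Lambda_i}\to\Ind_\Gamma^G\delta_N$, which contradicts non-co-soficity.

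I expect the main obstacle to be this last identification. Rooted spaces only see IRSs up to averaging over $K$, since a basepoint records no frame. To handle this, I would first check that the $K$-averaged map from $\IRS(G)$ to rooted laws is injective and homeomorphic on its image; this is the content of the correspondence in \cite{ABBGNRS17}. The key input is that $\Ind_\Gamma^G\delta_N$ is already $K$-invariant, because it is $G$-invariant. Convergence of rooted laws should then lift to convergence of IRSs, by compactness of $\Sub(G)$ together with uniqueness of limits.

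\textbf{Graphs.} Fix $a,b\geq3$, take a finitely generated non-sofic $Q$, and set $X=Y_Q$ from Proposition~\ref{prop:biregular-encoding}. This $X$ is a regular covering of the finite bipartite $(a,b)$-biregular graph $B=Q\backslash Y_Q$. Choosing the root uniformly among the orbit representatives gives exactly $\nu_{Y_Q}$. If $\nu_{Y_Q}$ were a local weak limit of finite graphs, it would be graph-sofic by definition. Proposition~\ref{prop:biregular-encoding} would then make $Q$ sofic, a contradiction.
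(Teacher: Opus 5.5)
Your manifold argument is essentially the paper's. It uses the same $\Gamma$ and $N$ from Remark~\ref{rem:prescribed-quotient}, the same injectivity-radius bound from the closed base, and the same reduction to the IRS--Benjamini--Schramm correspondence of \cite{ABBGNRS17}. The paper handles the frame issue by choosing a uniformly random oriented frame at the basepoint, so that averaging over $\operatorname{SO}(n)$ lifts convergence of pointed laws to convergence of framed laws. Your compactness-plus-injectivity argument does the same job. One small correction: $K$-invariance is not special to $\Ind_\Gamma^G\delta_N$, since every IRS is conjugation invariant, so that remark carries no weight.

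The gap is in the graph part. You claim that if $\nu_{Y_Q}$ were a local weak limit of finite graphs, it would be graph-sofic ``by definition.'' It would not. Graph-soficity, as defined in Section~\ref{sec:prelim}, requires approximating multigraphs of \emph{uniformly bounded degree}. The corollary, however, rules out approximation by arbitrary finite graphs, with no degree bound assumed. So Proposition~\ref{prop:biregular-encoding} does not yet apply.

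The missing step, which the paper supplies, is a truncation. Put $D=\max\{a,b\}$, and delete from each approximating graph all vertices of degree greater than $D$, together with their incident edges. Since $\nu_{Y_Q}$ is supported on a graph of maximum degree $D$, local convergence gives the following for each fixed $k$: the probability that some vertex within distance $k$ of a uniform root has degree $>D$ tends to $0$. This event is determined by the radius-$(k+1)$ ball, because that ball contains every edge incident to such a vertex. Three consequences follow:
\begin{enumerate}
\item The proportion of deleted vertices tends to $0$.
\item A radius-$k$ ball that contains no deleted vertex is unchanged.
\item The truncated graphs have maximum degree at most $D$ and the same local limit.
\end{enumerate}
Only after this reduction does Proposition~\ref{prop:biregular-encoding} yield soficity of $Q$ and the desired contradiction.
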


\begin{proof}
For the manifold assertion take
$X=N\backslash\HH^n$ from {Remark~\ref{rem:prescribed-quotient}},
working in $\operatorname{SO}^{+}(n,1)$. Its injectivity
radius is bounded below by that of the closed base manifold.
{Pointed isometries are required to preserve orientation.
Choose an oriented orthonormal frame uniformly at the basepoint.
Averaging over the compact frame group $\operatorname{SO}(n)$ lifts
convergence of the pointed laws to convergence of the framed laws.
The IRS--Benjamini--Schramm correspondence
\cite[Corollary~3.11 and the subsequent orbifold extension]{ABBGNRS17}
(see also \cite{AB22,Gel18}) then turns the asserted approximation into
co-soficity of $\Ind_\Gamma^G\delta_N$, a contradiction.\par}
{For graphs use $Y_Q$ from
Proposition~\ref{prop:biregular-encoding} for a non-sofic group $Q$.
A hypothetical approximating sequence can be assumed to have maximum
degree at most $\max\{a,b\}$: delete vertices of larger degree and
their incident edges. This preserves the local limit, because the
probability that any fixed rooted ball meets such a vertex tends to zero.}
\end{proof}

\begin{remark}
{Proposition~\ref{thm:lie-transfer}} applies to any normal
subgroup of a cocompact lattice in a real Lie group whose
centralizer is discrete and whose quotient is finitely
generated. Thus in the real cases any such non-sofic
quotient gives the required IRS. The tree argument uses
the particular graph encoding of Section~\ref{sec:graphs}.
\end{remark}

\begin{remark}
No torsion-free or cocompactness restriction is imposed on
the approximating lattices. In the Lie-group argument,
finite invariant volume supplies finite sets of points on
a local transversal. In the tree argument, the covolume
weights and finite good components in
Lemma~\ref{lem:tree-extraction} allow arbitrary tree lattices.
\end{remark}

We conclude with a general conjecture suggested by these constructions.
For a countable group, soficity means that every finitely generated
subgroup is sofic.

\begin{conjecture}\label{conj:induction}
Let $G$ be a locally compact second countable group, let
$\Gamma<G$ be a uniform lattice, and let $N\triangleleft\Gamma$.
If $\Gamma/N$ is non-sofic, then
$\Ind_\Gamma^G\delta_N$ is a non-co-sofic IRS of $G$.
\end{conjecture}

{Proposition~\ref{thm:lie-transfer}} proves the conjecture when $G$
is a real Lie group, $C_G(N)$ is discrete, and $\Gamma/N$ is
finitely generated. In its full generality, the conjecture would
{give Theorems~\ref{thm:main} and~\ref{thm:trees-padic} from the existence of non-sofic
quotients of uniform lattices in the groups under consideration.}

The conjecture would also imply that if $G$ admits one residually
finite uniform lattice $\Delta$, then every uniform lattice in $G$
is sofic. Indeed, choose finite-index normal subgroups
$\Delta_j\triangleleft\Delta$ decreasing to $\{1\}$. Then
\[
 \mu_{\Delta_j}^{G}
 =\int_{G/\Delta}\delta_{g\Delta_jg^{-1}}\dd m(g\Delta)
 \longrightarrow\delta_{\{1\}}.
\]
Thus $\delta_{\{1\}}$ is co-sofic, and the conjecture applies with
$N=\{1\}$. In particular, taking the residually finite lattice
$F_r\times F_s$ in
$\Aut(T_{2r})\times\Aut(T_{2s})$, for $r,s\geq2$, would establish
soficity of every uniform lattice in this product, including the
Burger--Mozes examples.\footnote{While we prove the conjecture for certain lattices in
$\Aut(T)$, we do not know how to deal with lattices in
$\Aut(T_{2r})\times\Aut(T_{2s})$.} See \cite[Question~2 and the subsequent
discussion]{Timar12} for the related problem of recovering group
soficity from finite approximations to an unlabelled Cayley graph.

\section*{Acknowledgments}
The author was partially supported by the National Science Foundation
under grant DMS-2505196. The author acknowledges the assistance of
OpenAI's GPT-6 Astra in developing the arguments and preparing the
manuscript.

\enlargethispage{2\baselineskip}

\end{document}